\newcommand{\ds}{\displaystyle}
\renewcommand{\subset}{\subseteq}
\renewcommand{\supset}{\supseteq}
\newcommand{\overbar}[1]{\mkern 1.5mu\overline{\mkern-1.5mu#1\mkern-1.5mu}\mkern 1.5mu}
\newcommand{\R}{\ensuremath{\mathbf{R}}}
\newcommand{\Rplus}{\ensuremath{\mathbf{R}_{\geqslant0}}}
\newcommand{\Rplusnul}{\ensuremath{\mathbf{R}_{>0}}}
\newcommand{\Rn}{\ensuremath{\mathbf{R}^n}}
\newcommand{\Rplusn}{\ensuremath{\mathbf{R}_{\geqslant0}^n}}
\newcommand{\N}{\ensuremath{\mathbf{N}}}
\newcommand{\Nnul}{\ensuremath{\mathbf{N}\setminus\{0\}}}
\newcommand{\Nn}{\ensuremath{\mathbf{N}^n}}
\newcommand{\Z}{\ensuremath{\mathbf{Z}}}
\newcommand{\Zn}{\ensuremath{\mathbf{Z}^n}}
\newcommand{\Zplus}{\ensuremath{\mathbf{Z}_{\geqslant0}}}
\newcommand{\Zplusn}{\ensuremath{\mathbf{Z}_{\geqslant0}^n}}
\newcommand{\Zplusnul}{\ensuremath{\mathbf{Z}_{>0}}}
\newcommand{\Q}{\ensuremath{\mathbf{Q}}}
\newcommand{\C}{\ensuremath{\mathbf{C}}}
\newcommand{\Qp}{\ensuremath{\mathbf{Q}_p}}
\newcommand{\Qpn}{\ensuremath{\mathbf{Q}_p^n}}
\newcommand{\Zp}{\ensuremath{\mathbf{Z}_p}}
\newcommand{\Zpx}{\ensuremath{\mathbf{Z}_p^{\times}}}
\newcommand{\Zpn}{\ensuremath{\mathbf{Z}_p^n}}
\newcommand{\Zpxn}{\ensuremath{(\mathbf{Z}_p^{\times})^n}}
\newcommand{\F}{\ensuremath{\mathbf{F}}}
\newcommand{\Fp}{\ensuremath{\mathbf{F}_p}}
\newcommand{\Fpn}{\ensuremath{\mathbf{F}_p^n}}
\newcommand{\Fpcross}{\ensuremath{\mathbf{F}_p^{\times}}}
\newcommand{\Fpcrossn}{\ensuremath{(\mathbf{F}_p^{\times})^n}}
\renewcommand{\N}{\Zplus}
\renewcommand{\Nnul}{\Zplusnul}
\renewcommand{\Nn}{\Zplusn}
\renewcommand{\phi}{\varphi}
\DeclareMathOperator{\supp}{supp}
\DeclareMathOperator{\mult}{mult}
\DeclareMathOperator{\ord}{ord}
\newcommand{\I}{\ensuremath{\mathcal{I}}}
\newcommand{\Zf}{\ensuremath{Z_f}}
\newcommand{\Zfg}{\ensuremath{Z_{f,gdx}}}
\newcommand{\ZI}{\ensuremath{Z_{\mathcal{I}}}}
\newcommand{\ZIg}{\ensuremath{Z_{\mathcal{I},gdx}}}
\newcommand{\Zfs}{\ensuremath{Z_f(s)}}
\newcommand{\Zfgs}{\ensuremath{Z_{f,gdx}(s)}}
\newcommand{\ZIs}{\ensuremath{Z_{\mathcal{I}}(s)}}
\newcommand{\ZIgs}{\ensuremath{Z_{\mathcal{I},gdx}(s)}}
\newcommand{\ff}{\ensuremath{\mathbf{f}}}
\newcommand{\Zff}{\ensuremath{Z_{\ff}}}
\newcommand{\Zffg}{\ensuremath{Z_{\ff,gdx}}}
\newcommand{\Zffs}{\ensuremath{Z_{\ff}(s)}}
\newcommand{\Zffgs}{\ensuremath{Z_{\ff,gdx}(s)}}
\newcommand{\Gf}{\ensuremath{\Gamma_f}}
\newcommand{\Gff}{\ensuremath{\Gamma_{\ff}}}
\newcommand{\Gg}{\ensuremath{\Gamma_g}}
\newcommand{\GI}{\ensuremath{\Gamma_{\mathcal{I}}}}
\newcommand{\Gglf}{\ensuremath{\Gamma^{\mathrm{gl}}_f}}
\newcommand{\ft}{\ensuremath{f_{\tau}}}
\newcommand{\fft}{\ensuremath{\ff_{\tau}}}
\newcommand{\fbart}{\ensuremath{\overline{f_{\tau}}}}
\newcommand{\Df}{\ensuremath{\Delta_f}}
\newcommand{\Dff}{\ensuremath{\Delta_{\ff}}}
\newcommand{\Dg}{\ensuremath{\Delta_g}}
\newcommand{\DI}{\ensuremath{\Delta_{\mathcal{I}}}}
\newcommand{\Dfg}{\ensuremath{\Delta_{f,g}}}
\newcommand{\Dffg}{\ensuremath{\Delta_{\ff,g}}}
\newcommand{\DIg}{\ensuremath{\Delta_{\mathcal{I},g}}}
\newcommand{\Dft}{\ensuremath{\Delta_f(\tau)}}
\newcommand{\Dfft}{\ensuremath{\Delta_{\ff}(\tau)}}
\newcommand{\DIt}{\ensuremath{\Delta_{\mathcal{I}}(\tau)}}
\newcommand{\gd}{\ensuremath{g_{\delta}}}
\newcommand{\fd}{\ensuremath{f_{\delta}}}
\newcommand{\ffd}{\ensuremath{\ff_{\delta}}}
\newcommand{\gbard}{\ensuremath{\overline{g_{\delta}}}}
\newcommand{\fbard}{\ensuremath{\overline{f_{\delta}}}}
\newcommand{\ffbard}{\ensuremath{\overline{\ff_{\delta}}}}
\newcommand{\Ld}{\ensuremath{L_{\delta}}}
\newcommand{\Nd}{\ensuremath{N_{\delta}}}
\newcommand{\Pd}{\ensuremath{P_{\delta}}}
\newcommand{\Qd}{\ensuremath{Q_{\delta}}}
\newcommand{\Sd}{\ensuremath{S_{\delta}}}
\theoremstyle{plain}
\newtheorem{theorem}{Theorem}[section]
\newtheorem{lemma}[theorem]{Lemma}
\newtheorem{proposition}[theorem]{Proposition}
\newtheorem{corollary}[theorem]{Corollary}
\theoremstyle{definition}
\newtheorem{definition}[theorem]{Definition}
\newtheorem{example}[theorem]{Example}
\theoremstyle{remark}
\newtheorem{remark}[theorem]{Remark}
\newtheorem{notation}[theorem]{Notation}
\title[Igusa's local zeta function associated to a polynomial measure]{Igusa's $p$-adic local zeta function\\associated to a polynomial mapping\\and a polynomial integration measure}
\author{Bart Bories}
\address{Department of Mathematics, KU Leuven, Celestijnenlaan 200b -- box 2400, 3001 Leuven, Belgium}
\email{bart.bories@wis.kuleuven.be}
\subjclass[2010]{11S80, 11S40}
\date{\today}
\keywords{Igusa's zeta function, polynomial mapping, non-degenerated}
\begin{document}
\begin{abstract}
For $p$ prime, we give an explicit formula for Igusa's local zeta function associated to a polynomial mapping $\ff=(f_1,\ldots,f_t):\Qpn\to\Qp^t$, with $f_1,\ldots,f_t\in\Zp[x_1,\ldots,x_n]$, and an integration measure on \Zpn\ of the form $|g(x)||dx|$, with $g$ another polynomial in $\Zp[x_1,\ldots,x_n]$. We treat the special cases of a single polynomial and a monomial ideal separately. The formula is in terms of Newton polyhedra and will be valid for \ff\ and $g$ sufficiently non-degenerated over \Fp\ with respect to their Newton polyhedra. The formula is based on, and is a generalization of results in \cite{DH01}, \cite{HMY07}, and \cite{VZ08}.
\end{abstract}

\maketitle

\setcounter{tocdepth}{1}

\tableofcontents

\section*{Introduction}
\subsection*{Motivation}
Introduced by Weil \cite{Weil65} in 1965 and first studied by Igusa \cite{Igu74,IguFHD}, the $p$-adic local zeta function of Igusa is closely related to the numbers $N_l$ of solutions of the polynomial congruences $f(x)\equiv0\bmod p^l$ for $l\geqslant1$. For instance, Igusa \cite{Igu74} and Denef \cite{Den84} used the $p$-adic zeta function to prove the rationality of the series generated by the numbers $N_l$. In 1992, Denef and Loeser \cite{DL92} obtained the topological zeta function as a kind of limit of $p$-adic zeta functions; later they presented in \cite{DL98} the finer and intrinsically defined motivic zeta function. All these invariants were originally associated to one polynomial or analytic function in several variables over a certain field (a $p$-adic field, \C, and an arbitrary field of characteristic zero, respectively), but the concepts have been generalized in many ways during the research that followed. For instance, we now associate these zeta functions to several polynomials or to an ideal in a polynomial ring. See, e.g., \cite{HMY07,VZ08,VV08,VVmcid2}.

Nowadays one is also interested in zeta functions of a number of polynomials and a more general measure or differential form. For example, in \cite{NV10bis} Veys and N{\'e}methi use this notion to prove a remarkable result on the generalized Monodromy Conjecture. For other applications, see, e.g., \cite{ACLM02,ACLM05}. In this respect, it is useful to examine which known results and formulas have their analogue in this generalized context. In this paper, we adapt results from \cite{DH01,HMY07,VZ08} and prove an explicit formula for Igusa's $p$-adic zeta function of a strongly non-degenerated polynomial mapping and a \lq polynomial measure\rq. Although we state the formula for the $p$-adic one, the result can also be formulated for the topological and the motivic zeta function. In \cite{BorTVBN} we will use this formula to investigate the analogue of Veys and N{\'e}methi's result for a related, stronger conjecture of Igusa--Denef--Loeser.

\subsection*{Igusa's zeta function of a polynomial mapping and a polynomial measure}
For a prime $p$, we denote by \Qp\ the field of $p$-adic numbers and by \Zp\ its subring of $p$-adic integers. Denote by $|\cdot|$ the $p$-adic norm on \Qp. On \Qpn, $n\in\Nnul$, we consider the Haar measure, so normalized that \Zpn\ has measure one, and we denote it by $|dx|=|dx_1\wedge\cdots\wedge dx_n|$. For a measurable subset $A\subset\Qpn$, we denote its measure by $\mu(A)$. Let us start with the definition of Igusa's local zeta function.

\begin{definition}[Igusa's $p$-adic local zeta function] Let $p$ be a prime number and $f(x)=f(x_1,\ldots,x_n)$ a polynomial in $\Qp[x_1,\ldots,x_n]$. Igusa's ($p$-adic) local zeta function associated to $f$ is defined as
\begin{equation*}%\label{def_igusa_f}
\Zf:\{s\in\C\mid\Re(s)>0\}\to\C:s\mapsto\Zfs=\int_{\Zpn}|f(x)|^s|dx|.
\end{equation*}

More generally, one defines Igusa's local zeta function associated to a polynomial mapping $\ff=(f_1,\ldots,f_t):\Qpn\to\Qp^t$, with $f_1,\ldots,f_t\in\Qp[x_1,\ldots,x_n]$, as
\begin{equation}\label{def_igusa_ff}
\Zff:\{s\in\C\mid\Re(s)>0\}\to\C:s\mapsto\Zffs=\int_{\Zpn}\|\ff(x)\|^s|dx|,
\end{equation}
where $\|\ff(x)\|=\max_i|f_i(x)|$.

To an ideal \I\ of $\Zp[x_1,\ldots,x_n]$, we associate Igusa's local zeta function
\begin{equation*}%\label{def_igusa_I}
\ZI:\{s\in\C\mid\Re(s)>0\}\to\C:s\mapsto\ZIs=\int_{\Zpn}|\I(x)|^s|dx|,
\end{equation*}
where $|\I(x)|=\max\{|f(x)|\mid f\in \I\}$.

We can further generalize these notions by considering alternative, \lq polynomial\rq\ integration measures on \Zpn, i.e., measures of the form $|g(x)||dx|$, with $g$ another polynomial in $\Qp[x_1,\ldots,x_n]$. For $f$, \ff, and \I\ as above, we put
\begin{gather*}%\label{def_igusa_ffg}
\Zfgs=\int_{\Zpn}|f(x)|^s|g(x)||dx|,\qquad\Zffgs=\int_{\Zpn}\|\ff(x)\|^s|g(x)||dx|,\\
\text{and}\qquad\ZIgs=\int_{\Zpn}|\I(x)|^s|g(x)||dx|.
\end{gather*}
These are the zeta functions that we will study here.
\end{definition}

\begin{remark}
In general, we associate $p$-adic zeta functions to polynomial mappings $\ff=(f_1,\ldots,f_t):\Qpn\to\Qp^t$, rather than to ideals $(f_1,\ldots,f_t)\lhd\Qp[x_1,\ldots,x_n]$, since \Zff, as defined in \eqref{def_igusa_ff}, depends on the specific polynomials $f_1,\ldots,f_t$, not only on the ideal generated by them. However, if we consider polynomials $f_1,\ldots,f_t$ over the $p$-adic integers, it turns out that \Zff\ only depends on the ideal $(f_1,\ldots,f_t)\lhd\Zp[x_1,\ldots,x_n]$; i.e., two polynomial mappings over \Zp\ give rise to the same zeta function, if their composing polynomials generate the same ideal of $\Zp[x_1,\ldots,x_n]$. This yields a proper definition of Igusa's zeta function of a $\Zp[x_1,\ldots,x_n]$ ideal, which coincides with the one given above. Indeed, for every set $\{f_1,\ldots,f_t\}$ of generators of an ideal $\I\lhd\Zp[x_1,\ldots,x_n]$, we have that
\begin{equation*}
|\I(x)|=\max_{f\in\I}|f(x)|=\max_{1\leqslant i\leqslant t}|f_i(x)|
\end{equation*}
for every $x\in\Zpn$.

Following \cite{HMY07}, we will use the terminology of ideals when it comes to monomial ideals (Sections~\ref{prem}--\ref{formule1}), otherwise we will speak in terms of polynomial mappings.
\end{remark}

Using resolution of singularities, Igusa \cite{Igu74} proved in 1974 that his $p$-adic zeta function \Zf, associated to one polynomial $f$, is a rational function in the variable $t=p^{-s}$. An alternative proof (based on $p$-adic cell decomposition) of this important fact was obtained ten years later by Denef \cite{Den84}. With the same techniques it can be proved that this rationality result still holds for the other versions of Igusa's local zeta function, described above. Hence all of these functions allow a meromorphic continuation to the whole complex plane, that we denote with the same symbol.

Considering Newton polyhedra (see Definition~\ref{def_NP} below), Denef and Hoornaert obtained in \cite{DH01} a very explicit formula for Igusa's local zeta function of a single polynomial, for a special class of polynomials, namely those who are non-degenerated over \Fp\ with respect to all the faces of their Newton polyhedron (see Definition~\ref{def_non-degenerated}). In their paper \cite{HMY07} of 2007, Howald, Musta{\c{t}}{\u{a}}, and Yuen prove a similar formula for Igusa's local zeta function \ZI, associated to a monomial ideal $\I\lhd\Z[x_1,\ldots,x_n]$ and the usual integration measure. In 2008, Veys and Z{\'u}{\~n}iga-Galindo \cite{VZ08} generalized those two formulas to a formula for Igusa's local zeta function \Zff\ of a polynomial mapping \ff, that is strongly non-degenerated over \Fp\ with respect to its Newton polyhedron (Definition~\ref{def_strong_non-degenerated}).

Our goal is to adapt those formulas to cover the generalized versions \Zfg, \Zffg, and \ZIg\ of Igusa's local zeta function, associated to a polynomial measure $|g(x)||dx|$ on $\Zpn$, where $g$ is a polynomial in $\Zp[x_1,\ldots,x_n]$ that is also non-degenerated over $\F_p$ with respect to its Newton polyhedron, and where the pairs $(f,g)$ and $(\ff,g)$ satisfy a supplementary non-degeneracy condition, described in Definitions~\ref{non-degeneratedII} and \ref{non-degeneratedIII}, respectively.

\subsection*{Overview}
In the first section, we list all the needed definitions, notations and results that are already known, without much explanation. For more details on Igusa's zeta function and Newton polyhedra, we refer to \cite{DH01}, \cite{HMY07}, and \cite{VZ08}; for more background on convex geometry, we refer to \cite{Roc70}. In Section~\ref{formule1}, we derive a formula for Igusa's zeta function associated to a monomial ideal and a measure $|g(x)||dx|$. A formula for Igusa's local zeta function of a single polynomial $f$ and a measure $|g(x)||dx|$, is proved in Section~\ref{formule2}. Section~\ref{sectHensel} is a preparation for this section, here we derive a formula for the same integral as appearing in the definition of \Zfgs, but with integration domain \Zpxn\ instead of \Zpn. The proof of this formula is based on Hensel's Lemma. In the last section, we state the most general formula, i.e., for Igusa's zeta function of a polynomial mapping and a polynomial integration measure.

All definitions and results mentioned in this paper have straightforward analogues over any $p$-adic field, i.e., any field $K$ such that $[K:\Qp]<\infty$.

\section*{Acknowledgements}
The author would like to thank Wim Veys for proposing the problem and for many useful suggestions. He also wants to thank the referee for reviewing the paper and for some interesting comments.

\section{Preliminaries}\label{prem}
\begin{remark}
In what follows, we will consider polynomials over \Zp, because this is more convenient when dealing with the non-degeneracy conditions we use throughout the paper. Note that this does not affect the generality; indeed, every polynomial $f$ over \Qp\ can be written as $f=p^{-i}\tilde{f}$ for some $i\in\Zplus$ and some $\tilde{f}\in\Zp[x_1,\ldots,x_n]$, which leads to, e.g., $\Zfs=p^{is}Z_{\tilde{f}}(s)$.
\end{remark}

\begin{definition}[Newton polyhedron]\label{def_NP}
Let $p$ be a prime number. We denote, for $\omega=(\omega_1,\ldots,\omega_n)\in\N^n$, by $x^{\omega}$ the corresponding monomial $x_1^{\omega_1}\cdots x_n^{\omega_n}$ in $\Zp[x_1,\ldots,x_n]$. Let $f(x)=f(x_1,\ldots,x_n)=\sum_{\omega\in\N^n}a_{\omega}x^{\omega}$ be a nonzero polynomial over \Zp, satisfying $f(0)=0$. Let $\Rplus=\{x\in\R\mid x\geqslant0\}$ and $\supp(f)=\{\omega\in\N^n\mid a_{\omega}\neq0\}$, the support of $f$. The Newton polyhedron \Gf\ of $f$ is defined as the convex hull in \Rplusn\ of the set
\begin{equation*}
\bigcup_{\omega\in\supp(f)}\omega+\Rplusn.
\end{equation*}
The global Newton polyhedron $\Gglf$ of $f$ is defined as the convex hull of $\supp(f)$. Clearly, we have $\Gf=\Gglf+\Rplusn$.

Let $\ff=(f_1,\ldots,f_t):\Qpn\to\Qp^t$ be a nonconstant polynomial mapping, with $f_1,\ldots,f_t\in\Zp[x_1,\ldots,x_n]$, satisfying $\ff(0)=0$. We define the support $\supp(\ff)$ of \ff\ as the union of the supports of its composing polynomials, and its Newton polyhedron \Gff, as above, as the convex hull in \Rplusn\ of $\bigcup_{\omega\in\supp(\ff)}\omega+\Rplusn$.

Let $\I\lhd\Zp[x_1,\ldots,x_n]$ be a nonzero proper monomial ideal\footnote{A monomial ideal is an ideal that can be generated by monic monomials.}. We define the Newton polyhedron \GI\ of \I\ as the convex hull in \Rplusn\ of those $\omega\in\N^n$, such that $x^{\omega}\in \I$. The polyhedron \GI\ coincides with the Newton polyhedron of the monomial mapping $(x^{\omega_1},\ldots,x^{\omega_t})$ for any set $\{x^{\omega_1},\ldots,x^{\omega_t}\}$ of monomial generators of \I.
\end{definition}

\begin{notation}
For $a\in\Zpn$, we denote by $\overline{a}=a+(p\Zp)^n\in\Fpn$ its reduction modulo $(p\Zp)^n$. For $f\in\Zp[x_1,\ldots,x_n]$, we denote by $\overline{f}$ the polynomial over \Fp, obtained from $f$, by reducing each of its coefficients modulo $p\Zp$. Analogously, for a polynomial mapping $\ff=(f_1,\ldots,f_t)$, we denote $\overline{\ff}=(\overline{f_1},\ldots,\overline{f_t})$.
\end{notation}

\begin{definition}[Non-degenerated]\label{def_non-degenerated}
Let $f$ be as in Definition \ref{def_NP}. For every face\footnote{By a face of $\Gf$ we mean $\Gf$ itself or one of its proper faces, which are the intersections of $\Gf$ with a supporting hyperplane. See, e.g., \cite{Roc70}.} $\tau$ of the Newton polyhedron $\Gf$ of $f$, we put
\begin{equation*}
\ft(x)=\sum_{\omega\in\tau}a_{\omega}x^{\omega}.
\end{equation*}

We say that $f$ is non-degenerated over \Fp\ with respect to (all the faces of) its Newton polyhedron \Gf, if for every\footnote{Thus also for \Gf.} face $\tau$ of \Gf, the zero locus of the polynomial \fbart\ has no singularities in \Fpcrossn, or, equivalently, the set of congruences
\begin{equation*}
\left\{
\begin{aligned}
\ft(x)&\equiv0\bmod p,\\
\frac{\partial \ft}{\partial x_i}(x)&\equiv0\bmod p;\quad i=1,\ldots,n;
\end{aligned}
\right.
\end{equation*}
has no solutions in \Zpxn.
\end{definition}

%\begin{remark}
%$\ldots$
%\end{remark}
%
\begin{definition}[Strongly non-degenerated]\label{def_strong_non-degenerated}
Let \ff\ be as in Definition \ref{def_NP}. For every face $\tau$ of the Newton polyhedron $\Gff$ of \ff, we denote $\fft=(f_{1,\tau},\ldots,f_{t,\tau})$, with the $f_{i,\tau}$ defined in the same way as the \ft\ in Definition~\ref{def_non-degenerated}.

We call the mapping \ff\ strongly non-degenerated over the field \Fp\ with respect to (all the faces of) its Newton polyhedron \Gff, if for every face $\tau$ of \Gff\ and all $a\in\Zpxn$, satisfying $\fft(a)\equiv0\bmod p$, the Jacobian matrix $J(\fft,a)=\left((\partial f_{i,\tau}/\partial x_j)(a)\right)_{i,j}$ has maximal rank (equal to $\min(t,n)$) modulo $p$.\footnote{Meaning that the matrix $\left(\overline{(\partial f_{i,\tau}/\partial x_j)(a)}\right)_{i,j}$ over \Fp\ has maximal rank.} In other words: \ff\ is strongly non-degenerated over \Fp\ with respect to \Gff, if for every face $\tau$ of \Gff\ and all $\overline{a}\in\Fpcrossn$ in the zero locus of $\overline{\fft}$, the Jacobian matrix $J(\overline{\fft},\overline{a})=\left((\partial \overline{f_{i,\tau}}/\partial x_j)(\overline{a})\right)_{i,j}$ over \Fp, has maximal rank.
\end{definition}

\begin{definition}[$m_f(k)$]\label{def_mf}
Let $f$ be as in Definition \ref{def_NP}. For $k\in\Rplusn$, we define
\begin{equation*}
m_f(k)=\inf_{x\in\Gf}k\cdot x,
\end{equation*}
where $k\cdot x$ denotes the scalar product of $k$ and $x$.
\end{definition}

The infimum in the definition above is actually a minimum, where the minimum can as well be taken over the global Newton polyhedron \Gglf\ of $f$, which is a compact set, or even over the finite set $\supp(f)$.

\begin{definition}[First meet locus]\label{def_firstmeetlocus}
Let $f$ be as in Definition \ref{def_NP} and $k\in\Rplusn$. We define the first meet locus of $k$ as the face
\begin{equation*}
F_f(k)=\{x\in\Gf\mid k\cdot x=m_f(k)\}
\end{equation*}
of \Gf.
\end{definition}

\begin{definition}[Primitive vector]
A vector $k\in\Rn$ is called primitive if the components of $k$ are integers whose greatest common divisor is one.
\end{definition}

\begin{definition}[\Df]\label{def_Df}
Let $f$ be as in Definition \ref{def_NP}. For a face $\tau$ of \Gf, we call
\begin{equation*}
\Dft=\{k\in\Rplusn\mid F_f(k)=\tau\}
\end{equation*}
the cone associated to $\tau$. The \Dft\ are the equivalence classes of the equivalence relation $\sim_f$ on \Rplusn, defined by
\begin{equation*}
k\sim_f k'\qquad\textrm{if and only if}\qquad F_f(k)=F_f(k').
\end{equation*}
The \lq cones\rq\ \Dft\ thus form a partition of \Rplusn\ which we denote by \Df; i.e.,
\begin{equation*}
\Df=\{\Dft\mid \tau\ \mathrm{is\ a\ face\ of}\ \Gf\}=\Rplusn/\sim_f.
\end{equation*}
\end{definition}

The \Dft\ are in fact relatively open\footnote{A subset of \Rplusn\ is called relatively open if it is open in its affine closure.} convex cones\footnote{A subset $C$ of \Rn\ is called a convex cone if it is a convex set and $\lambda x\in C$ for all $x\in C$ and all $\lambda\in\Rplusnul$.} with a very specific structure, as stated in the following lemma.

\begin{lemma}\label{lemma_struc_Dft} \cite[Lemma 2.6]{DH01}. Let $f$ be as in Definition~\ref{def_NP}. Let $\tau$ be a proper face of \Gf\ and let $\gamma_1,\ldots,\gamma_r$ be the facets\footnote{A facet is a face of codimension one.} of \Gf\ that contain $\tau$. Let $k_1,\ldots,k_r$ be the unique primitive vectors in $\N^n\setminus\{0\}$ that are perpendicular to $\gamma_1,\ldots,\gamma_r$, respectively. Then the cone \Dft\ associated to $\tau$ is the convex cone
\begin{equation*}
\Dft=\{\lambda_1k_1+\lambda_2k_2+\cdots+\lambda_rk_r\mid \lambda_i\in\Rplusnul\},
\end{equation*}
and its dimension\footnote{The dimension of a convex cone is the dimension of its affine hull.} equals $n-\dim\tau$.
\end{lemma}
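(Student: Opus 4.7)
The plan is to identify \Dft\ with the relative interior of the \emph{normal cone} of \Gf\ at $\tau$, and then to read off both parts of the lemma from the standard description of this cone. Write $H_i=\{x\in\R^n\mid k_i\cdot x=m_f(k_i)\}$ for the supporting hyperplane cutting out $\gamma_i$. The argument relies on two classical polyhedral facts: (a)~every proper face of a convex polyhedron is the intersection of the facets containing it, so $\tau=\gamma_1\cap\cdots\cap\gamma_r$; and (b)~the relative interior of a finitely generated convex cone coincides with the set of strict positive combinations of its generators.

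For the inclusion $\supseteq$, take $k=\sum_i\lambda_ik_i$ with all $\lambda_i>0$. Since $k_i\cdot x\geqslant m_f(k_i)$ on \Gf,
\begin{equation*}
k\cdot x=\sum_i\lambda_i(k_i\cdot x)\geqslant\sum_i\lambda_im_f(k_i),
\end{equation*}
with equality iff $x\in\gamma_i$ for every $i$, i.e.\ iff $x\in\tau$ by~(a). Hence $F_f(k)=\tau$, so $k\in\Dft$. Perturbing by $\varepsilon(k_1+\cdots+k_r)$ and letting $\varepsilon\to 0^+$, the same calculation further shows that the full nonnegative cone $\{\sum_i\mu_ik_i\mid\mu_i\geqslant 0\}$ is contained in $\overline{\Dft}$.

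For the reverse inclusion, let $k\in\Dft$ and pick $y$ in the relative interior of $\tau$. The tangent cone of \Gf\ at $y$ is the polyhedral cone $\{v\in\R^n\mid k_i\cdot v\geqslant 0,\ i=1,\ldots,r\}$ cut out by the facet constraints active at $y$, and the minimality $k\cdot y=m_f(k)$ forces $k\cdot v\geqslant 0$ throughout this cone. Farkas's lemma then yields a nonnegative representation $k=\sum_i\mu_ik_i$, so $\overline{\Dft}$ lies in the cone generated by the $k_i$. Combined with the previous paragraph, $\overline{\Dft}$ is exactly this cone. Finally, \Dft\ consists of those $k$ for which $F_f(k)$ equals $\tau$ rather than a strictly larger face of \Gf---this is precisely the property separating \Dft\ from the relative boundary of its closure---so \Dft\ is the relative interior of $\overline{\Dft}$, and (b) supplies the desired description.

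The dimension assertion then follows from general polyhedral duality: the normal cone at a $d$-dimensional face of a full-dimensional convex polyhedron has dimension $n-d$, whence $\dim\Dft=\dim\overline{\Dft}=n-\dim\tau$. The step I anticipate as most delicate is the precise identification $\Dft=\operatorname{relint}(\overline{\Dft})$, because when the normal cone at $\tau$ is non-simplicial the vectors $k_i$ are linearly dependent, nonnegative representations of a given $k$ are non-unique, and the correspondence between faces of this cone and faces of \Gf\ containing $\tau$ must be tracked with care.
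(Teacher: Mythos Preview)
The paper does not prove this lemma; it is quoted from \cite[Lemma~2.6]{DH01} in the preliminaries section, which the author explicitly describes as listing ``needed definitions, notations and results that are already known, without much explanation.'' There is therefore no in-paper proof to compare against.

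Your argument is the standard one via normal cones and is essentially correct. The inclusion $\{\sum_i\lambda_ik_i:\lambda_i>0\}\subseteq\Dft$ is clean, and the use of Farkas's lemma (equivalently, the description of the dual of $\{v:k_i\cdot v\geqslant 0\}$) to obtain $\Dft\subseteq\{\sum_i\mu_ik_i:\mu_i\geqslant 0\}$ is the right move. The step you flag as delicate---passing from these two inclusions to the exact equality $\Dft=\{\sum_i\lambda_ik_i:\lambda_i>0\}$---is indeed the only point that deserves an extra sentence. The slickest way to close it is to note that the first inclusion, applied simultaneously to \emph{every} face $\tau'$ of \Gf, shows $\operatorname{relint}(N(\tau'))\subseteq\Delta_f(\tau')$ for all $\tau'$; since the $\Delta_f(\tau')$ are pairwise disjoint and the relative interiors of the normal cones cover $\Rplusn$, these inclusions are forced to be equalities. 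This avoids having to track the face lattice of the (possibly non-simplicial) normal cone directly.
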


\begin{definition}\label{def_rationalcone}
For $k_1,\ldots,k_r\in\Rn\setminus\{0\}$, we call $\Delta=\{\lambda_1k_1+\lambda_2k_2+\cdots+\lambda_rk_r\mid \lambda_i\in\Rplusnul\}$ the cone strictly positively spanned by the vectors $k_1,\ldots,k_r$. When the $k_1,\ldots,k_r$ can be chosen from \Zn, we call it a rational cone. If we can choose $k_1,\ldots,k_r$ linearly independent over \R, $\Delta$ is called a simplicial cone. If $\Delta$ is rational and $k_1,\ldots,k_r$ can be chosen from a \Z-module basis of \Zn, we call $\Delta$ a simple cone.
\end{definition}

It follows from Lemma~\ref{lemma_struc_Dft} that the topological closures $\overbar{\Dft}$\footnote{$\overbar{\Dft}=\{\lambda_1k_1+\lambda_2k_2+\cdots+\lambda_rk_r\mid \lambda_i\in\Rplus\}=\{k\in\Rplusn\mid F_f(k)\supset\tau\}$.} of the cones \Dft\ form a fan\footnote{A fan $\mathcal{F}$ is a finite set of rational polyhedral cones such that every face of a cone in $\mathcal{F}$ is contained in $\mathcal{F}$ and the intersection of each two cones $C$ and $C'$ in $\mathcal{F}$ is a face of both $C$ and $C'$.} $\overbar{\Df}$ of rational polyhedral cones\footnote{A rational polyhedral cone is a closed convex cone, generated by a finite subset of \Zn.}.

\begin{remark}
The function $m_f$ from Definition~\ref{def_mf} is linear on each cone $\overbar{\Dft}$.
\end{remark}

\begin{remark}
Let \ff\ and \I\ be as in Definition~\ref{def_NP}. Let $k\in\Rplusn$ and let $\tau$ and $\tau'$ be faces of \Gff\ and \GI, respectively. We then have analogous definitions and results for $m_{\ff}(k)$, $F_{\ff}(k)$, $\sim_{\ff}$, $\Dff(\tau)$, \Dff, and $\overbar{\Dff}$, associated to \ff, and for $m_{\I}(k)$, $F_{\I}(k)$, $\sim_{\I}$, $\DI(\tau')$, \DI, and $\overbar{\DI}$, associated to \I.
\end{remark}

We state without proofs, the following two lemmas (see, e.g., \cite{DH01}).

\begin{lemma}
Let $\Delta$ be the cone strictly positively spanned by the vectors $k_1,\ldots,k_r\allowbreak\in\Rplusn\setminus\{0\}$. Then there exists a finite partition of $\Delta$ into cones $\Delta_i$, such that each $\Delta_i$ is strictly positively spanned by a \R-linearly independent subset of $\{k_1,\ldots,k_r\}$. We call such a decomposition a simplicial decomposition of $\Delta$ without introducing new rays.
\end{lemma}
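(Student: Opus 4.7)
I would prove this by induction on $r$, the number of generating vectors. The base case $r=1$ is immediate: $\{k_1\}$ is $\R$-linearly independent and $\Delta$ itself is the desired (singleton) partition. For the inductive step, suppose the result holds for cones strictly positively spanned by fewer than $r$ vectors, and let $\Delta$ be strictly positively spanned by $k_1,\ldots,k_r$. If these vectors are $\R$-linearly independent, $\Delta$ itself is simplicial and we are done. Otherwise, fix a nontrivial relation $\sum_i\alpha_i k_i=0$; since all $k_i\in\Rplusn\setminus\{0\}$, no one-sided relation is possible (a sum of nonnegative-orthant vectors with one-signed coefficients cannot vanish), so both $I_+=\{i:\alpha_i>0\}$ and $I_-=\{i:\alpha_i<0\}$ are nonempty.

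If some $k_j$ lies in the closed cone positively spanned by $\{k_i:i\neq j\}$ --- say $k_j=\sum_{i\neq j}\beta_i k_i$ with $\beta_i\geq 0$ --- then the cones strictly positively spanned by $\{k_1,\ldots,k_r\}$ and by $\{k_i:i\neq j\}$ in fact coincide. Indeed any $\sum_i\mu_i k_i$ with $\mu_i>0$ rewrites as $\sum_{i\neq j}(\mu_i+\mu_j\beta_i)k_i$ with all positive coefficients, and conversely any $\sum_{i\neq j}\lambda_i k_i$ with $\lambda_i>0$ equals $\epsilon k_j+\sum_{i\neq j}(\lambda_i-\epsilon\beta_i)k_i$ for $\epsilon>0$ small enough. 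The inductive hypothesis applied to $\{k_i:i\neq j\}$ then furnishes the partition.

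If no generator is redundant, I would produce the partition by a regular (coherent) subdivision. Choose a generic linear functional $h:\R^r\to\R$ and, for each $x\in\overline{\Delta}$, let $\mu(x)=(\mu_1(x),\ldots,\mu_r(x))$ be the unique minimizer of $h$ over the polytope of nonnegative representations $\{\mu\in\R^r:\mu_i\geq 0,\ \sum_i\mu_i k_i=x\}$. By standard linear-programming theory, for generic $h$ this minimizer is a vertex of the polytope, so its support $S(x)\subseteq\{1,\ldots,r\}$ has size at most $\dim\overline{\Delta}$ and the corresponding $\{k_i:i\in S(x)\}$ is $\R$-linearly independent. Partitioning $\Delta$ according to the nonempty fibers of $x\mapsto S(x)$ yields the required decomposition into relatively open simplicial cones, each strictly positively spanned by a subset of $\{k_1,\ldots,k_r\}$.

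The main obstacle is the non-redundant case: one has to check carefully that the generic-functional construction genuinely produces a partition of $\Delta$ (not merely a covering), that each nonempty fiber coincides with the full relatively open simplicial cone $\{\sum_{i\in S}\mu_i k_i:\mu_i>0\}$ rather than a proper subset of it, and that strict positivity (not merely nonnegativity) of the coefficients on the supporting subset is preserved. Genericity of $h$ --- i.e., avoiding the finitely many hyperplanes in $\R^r$ dictated by the circuits (minimal linear dependencies) among $k_1,\ldots,k_r$ --- addresses all these issues, but the bookkeeping deserves care.
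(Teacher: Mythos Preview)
The paper does not prove this lemma; it is stated without proof, citing \cite{DH01} as a reference. So there is no in-paper argument to compare your attempt against.

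Your approach is sound and is essentially the standard one via regular (coherent) subdivisions. Two small comments. First, the observation that any linear dependence among vectors in $\Rplusn\setminus\{0\}$ must involve coefficients of both signs is correct, but you never use it afterwards; what you really need from positivity is that the fibre polytope $P(x)=\{\mu\geqslant 0:\sum_i\mu_ik_i=x\}$ is bounded (take a strictly positive linear form on $\Rplusn$), so that the LP minimum is attained at a vertex. Second, the inductive Case~1 is fine, but the Case~2 construction already works for arbitrary generating sets, so the case split and the induction are not really needed. As you yourself note, the genuine content lies in checking that for generic $h$ the fibres of $x\mapsto S(x)$ are exactly the relative interiors of the cones of the regular triangulation of $\overline{\Delta}$ determined by $h$; this follows from the lower-envelope characterization (a subset $S$ is a cell iff there exists a linear form $\ell$ with $h_i\geqslant-\ell(k_i)$ for all $i$, with equality precisely on $S$, and then $h\cdot\mu_S(x)=-\ell(x)$ is optimal for every $x$ in the open cone over $S$). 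Since $\Delta$ is the relative interior of $\overline{\Delta}$ and regular triangulations restrict to the boundary, each such relative interior lies entirely inside $\Delta$ or entirely on $\partial\overline{\Delta}$, yielding the desired partition. With these points filled in, your argument is complete.
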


\begin{lemma}
Let $\Delta$ be a rational simplicial cone. Then there exists a finite partition of $\Delta$ into simple cones. (In general, such a decomposition requires the introduction of new rays.)
\end{lemma}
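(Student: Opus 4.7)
The plan is an induction on the \emph{multiplicity} of $\Delta$. Writing $\Delta$ as the cone strictly positively spanned by $\R$-linearly independent primitive vectors $k_1,\ldots,k_r\in\Zn$, I will set
\[
\mult(\Delta)=\#\bigl(\{\lambda_1k_1+\cdots+\lambda_rk_r\mid 0\leqslant\lambda_i<1\}\cap\Zn\bigr),
\]
which is the index of the sublattice $\Z k_1+\cdots+\Z k_r$ inside the saturated lattice $\Zn\cap\aff(\Delta)$. In the base case $\mult(\Delta)=1$, the $k_i$ form a $\Z$-basis of $\Zn\cap\aff(\Delta)$, and since this sublattice is saturated in $\Zn$ they extend to a full $\Z$-basis of $\Zn$ (for instance via Smith normal form); hence $\Delta$ is already simple and the trivial partition $\{\Delta\}$ suffices.

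For the inductive step, I will suppose $\mult(\Delta)>1$ and pick a nonzero lattice point $v=\sum_i\lambda_ik_i$ in the half-open parallelepiped (so $0\leqslant\lambda_i<1$ for every $i$). For each index $i$ with $\lambda_i>0$ I form the rational simplicial cone $\Delta_i$ strictly positively spanned by $\{k_1,\ldots,k_{i-1},v,k_{i+1},\ldots,k_r\}$. A direct comparison of fundamental parallelepipeds (replacing $k_i$ by $v=\sum_j\lambda_jk_j$ multiplies the determinant by $\lambda_i$) gives $\mult(\Delta_i)=\lambda_i\cdot\mult(\Delta)$, which is strictly smaller than $\mult(\Delta)$ since $\lambda_i<1$. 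The induction hypothesis therefore applies to each $\Delta_i$ separately and yields a finite decomposition into simple cones, and concatenating these decompositions provides the desired partition of $\Delta$.

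The main obstacle I anticipate is not the multiplicity-dropping step itself but the bookkeeping needed to turn this covering into a genuine partition in the paper's sense---relatively open cones whose disjoint union exhausts $\Delta$. The top-dimensional cones $\Delta_i$ are manifestly pairwise disjoint (a point $\sum t_jk_j\in\Delta$ lies in $\Delta_i$ precisely when $i$ uniquely minimises $t_j/\lambda_j$ among indices with $\lambda_j>0$), but their common lower-dimensional faces that fall inside $\Delta$ must be listed as separate cells of the partition. I will handle this by applying the same star-subdivision recursively to those faces, a process that terminates by the same multiplicity induction. The parenthetical remark in the statement is the warning that, in contrast to the preceding lemma, here the ray $\Rplusnul\cdot v$ is genuinely new---and it is precisely this novelty that powers the strict decrease of multiplicity.
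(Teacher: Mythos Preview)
The paper does not prove this lemma: it is one of the two results explicitly ``stated without proofs'' with a pointer to \cite{DH01}. So there is nothing in the paper to compare your argument against. Your approach---induction on $\mult(\Delta)$ via star subdivision at a nonzero lattice point $v$ of the fundamental parallelepiped---is the standard one and is essentially correct.

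Two small points are worth tightening. First, the identity $\mult(\Delta_i)=\lambda_i\cdot\mult(\Delta)$ is literally true only when $v$ is primitive; otherwise the cone multiplicity (which by the paper's convention is computed from primitive generators) is $\lambda_i\mult(\Delta)/\gcd(v)$, hence still strictly smaller. Since any nonzero lattice point in the half-open box has its primitivisation in the same box, you may simply take $v$ primitive from the outset. Second, your final sentence says the lower-dimensional cells of the star subdivision are dispatched ``by the same multiplicity induction,'' but those cells---the relatively open cones strictly positively spanned by $\{v\}\cup\{k_j:j\notin T\}$ for $T\subseteq\{i:\lambda_i>0\}$ with $\lvert T\rvert\geqslant2$---need not have smaller multiplicity than $\Delta$. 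What they do have is strictly smaller dimension, so the clean fix is a lexicographic induction on the pair $(\dim\Delta,\mult\Delta)$ rather than on $\mult\Delta$ alone.
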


Finally, we need the following notion, which is closely related to the notion of a simple cone.

\begin{definition}\label{def_mult}
Let $k_1,\ldots,k_r$ be \Q-linearly independent vectors in \Zn. The multiplicity of $k_1,\ldots,k_r$, denoted by $\mult(k_1,\ldots,k_r)$, is defined as the index of the lattice $\Z k_1+\cdots+\Z k_r$ in the group of points with integral coordinates in the subspace spanned by $k_1,\ldots,k_r$ of the \Q-vector space $\Q^n$. One can check that this number equals the cardinality of the set
\begin{equation*}
\Zn\cap\left\{\sum\nolimits_{i=1}^r\lambda_ik_i\;\middle\vert\;0\leqslant\lambda_i<1\text{ for }i=1,\ldots,r\right\}.
\end{equation*}

Let $\Delta$ be the cone strictly positively spanned by $k_1,\ldots,k_r$. We define the multiplicity of $\Delta$ as the multiplicity of $k_1,\ldots,k_r$, and we denote it by $\mult\Delta$.
\end{definition}

\begin{remark}
Let $\Delta$ be as in Definition~\ref{def_mult}. Note that $\Delta$ is simple if and only if $\mult\Delta=\mult(k_1,\ldots,k_r)=1$.
\end{remark}

\section{Igusa's \except{toc}{local }zeta function of a monomial ideal and a polynomial measure}\label{formule1}
In the previous section we met partitions \Df, \Dff, and \DI\ of \Rplusn, associated to a polynomial, a polynomial mapping, and a monomial ideal, respectively. When, as in this section, we are dealing with two polynomials $f$ and $g$, a polynomial mapping \ff\ and a polynomial $g$, or an ideal \I\ and a polynomial $g$, we will also consider a partition of \Rplusn\ by finitely and strictly positively spanned rational cones, associated to both $f$ and $g$, both \ff\ and $g$, or both \I\ and $g$.

\begin{definition}[\Dfg, \Dffg, \DIg]\label{def_Dfg}
Let $f$, \ff, \I, and $g$ be as in Definition~\ref{def_NP}. The partition \Dfg\ of \Rplusn, associated to $f$ and $g$, will consist of all the nonempty intersections of cones in \Df\ with cones in \Dg; i.e.,
\begin{multline*}
\Dfg=\{\Dft\cap\Dg(\tau')\mid\\\tau\textrm{ is a face of }\Gf,\ \tau'\textrm{ is a face of }\Gg,\textrm{ and }\Dft\cap\Dg(\tau')\neq\emptyset\}.
\end{multline*}
The set of cones \Dfg\ will then be the quotient of \Rplusn\ by the equivalence relation
\begin{equation*}
k\sim_{f,g} k'\qquad\textrm{if and only if}\qquad F_f(k)=F_f(k')\textrm{ and }F_g(k)=F_g(k').
\end{equation*}
The partitions \Dffg, associated to \ff\ and $g$, and \DIg, associated to \I\ and $g$, are defined in the same way.
\end{definition}

%
%begin figuur voorbeeld
%
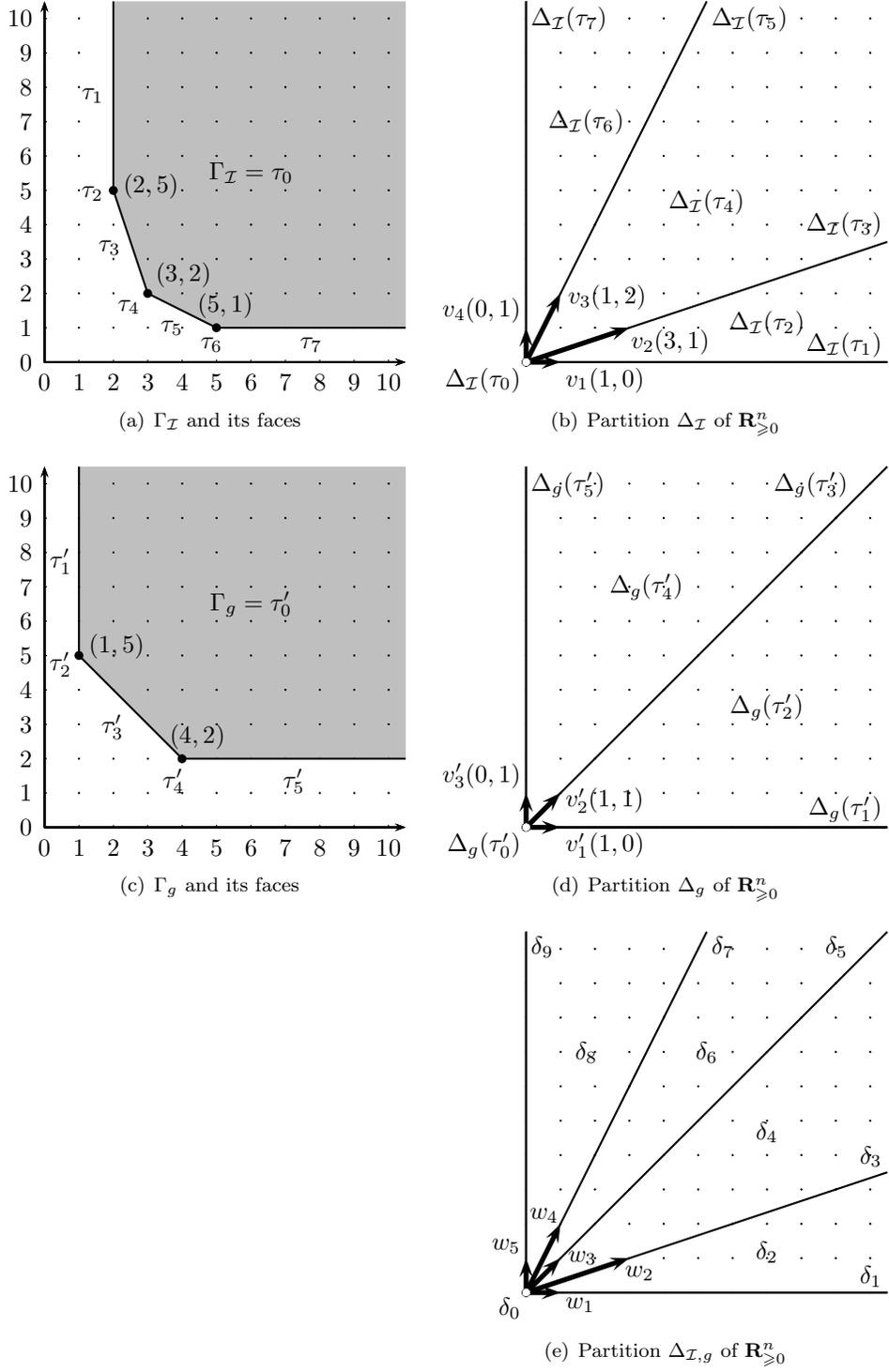
\begin{figure}
\psset{unit=.0385\textwidth}
\centering
\subfigure[\GI\ and its faces]{
\begin{pspicture}(-1,-1)(10.6,10.6)%\psgrid
\pspolygon*[linecolor=lightgray,linewidth=0pt](2,10.5)(2,5)(3,2)(5,1)(10.5,1)(10.5,10.5)
\psgrid[subgriddiv=1,griddots=1,gridlabels=0pt](10,10)
\psaxes[ticksize=1pt]{->}(0,0)(0,0)(10.5,10.5)
\psdots(2,5)(3,2)(5,1)
\psline(2,10.5)(2,5)(3,2)(5,1)(10.5,1)
\uput{4pt}[l](2,7.75){$\tau_1$}
\uput{4pt}[190](2,5){$\tau_2$}
\uput{4pt}[199](2.5,3.5){$\tau_3$}
\uput{4pt}[222](3,2){$\tau_4$}
\uput{4pt}[244](4,1.5){$\tau_5$}
\uput{4pt}[257](5,1){$\tau_6$}
\uput{4pt}[d](7.75,1){$\tau_7$}
\rput(6,5.5){$\GI=\tau_0$}
\uput{4pt}[10](2,5){$(2,5)$}
\uput{4pt}[42](3,2){$(3,2)$}
\uput{4pt}[77](5,1){$(5,1)$}
\end{pspicture}
}\hfill
\subfigure[Partition \DI\ of \Rplusn]{
\begin{pspicture}(-2.4,-1)(10.5,10.6)%\psgrid
\psgrid[subgriddiv=1,griddots=1,gridlabels=0pt](10,10)
\psaxes[labels=none,ticks=none](0,0)(0,0)(10.5,10.5)
\psline(5.25,10.5)(0,0)(10.5,3.5)
\psline[linewidth=2pt]{->}(0,0)(0,1)
\psline[linewidth=2pt]{->}(0,0)(1,2)
\psline[linewidth=2pt]{->}(0,0)(3,1)
\psline[linewidth=2pt]{->}(0,0)(1,0)
\psdot[dotstyle=o](0,0)
\uput{3pt}[ul](10.5,0){$\DI(\tau_1)$}
\rput(7,1.12){$\DI(\tau_2)$}
\uput{3pt}[ul](10.5,3.5){$\DI(\tau_3)$}
\rput(5.25,4.67){$\DI(\tau_4)$}
\uput{3pt}[dr](5.25,10.5){$\DI(\tau_5)$}
\rput(1.75,7){$\DI(\tau_6)$}
\uput{3pt}[dr](0,10.5){$\DI(\tau_7)$}
\uput{3pt}[dl](0,0){$\DI(\tau_0)$}
\uput{3pt}[ul](0,1){$v_4(0,1)$}
\uput{3pt}[-10](1,2){$v_3(1,2)$}
\uput{1pt}[dr](3,1){$v_2(3,1)$}
\uput{3pt}[dr](1,0){$v_1(1,0)$}
\end{pspicture}
}\\
\subfigure[\Gg\ and its faces]{
\begin{pspicture}(-1,-1)(10.6,10.6)%\psgrid
\pspolygon*[linecolor=lightgray,linewidth=0pt](1,10.5)(1,5)(4,2)(10.5,2)(10.5,10.5)
\psgrid[subgriddiv=1,griddots=1,gridlabels=0pt](10,10)
\psaxes[ticksize=1pt]{->}(0,0)(0,0)(10.5,10.5)
\psdots(1,5)(4,2)
\psline(1,10.5)(1,5)(4,2)(10.5,2)
\uput{3pt}[l](1,7.75){$\tau_1'$}
\uput{3pt}[203](1,5){$\tau_2'$}
\uput{4pt}[225](2.5,3.5){$\tau_3'$}
\uput{4pt}[248](4,2){$\tau_4'$}
\uput{4pt}[d](7.25,2){$\tau_5'$}
\rput(6,6.5){$\Gg=\tau_0'$}
\uput{4pt}[23](1,5){$(1,5)$}
\uput{4pt}[68](4,2){$(4,2)$}
\end{pspicture}
}\hfill
\subfigure[Partition \Dg\ of \Rplusn]{
\begin{pspicture}(-2.4,-1)(10.5,10.6)%\psgrid
\psgrid[subgriddiv=1,griddots=1,gridlabels=0pt](10,10)
\psaxes[labels=none,ticks=none](0,0)(0,0)(10.5,10.5)
\psline(0,0)(10.5,10.5)
\psline[linewidth=2pt]{->}(0,0)(0,1)
\psline[linewidth=2pt]{->}(0,0)(1,1)
\psline[linewidth=2pt]{->}(0,0)(1,0)
\psdot[dotstyle=o](0,0)
\uput{3pt}[ul](10.5,0){$\Dg(\tau_1')$}
\rput(7,3.5){$\Dg(\tau_2')$}
\uput{3pt}[dl](9.5,10.5){$\Dg(\tau_3')$}
\rput(3.5,7){$\Dg(\tau_4')$}
\uput{3pt}[dr](0,10.5){$\Dg(\tau_5')$}
\uput{3pt}[dl](0,0){$\Dg(\tau_0')$}
\uput{3pt}[ul](0,1){$v_3'(0,1)$}
\uput{2pt}[-23](1,1){$v_2'(1,1)$}
\uput{3pt}[dr](1,0){$v_1'(1,0)$}
\end{pspicture}
}\\
\hfill\subfigure[Partition \DIg\ of \Rplusn]{
\begin{pspicture}(-2.4,-1)(10.5,10.6)%\psgrid
\psgrid[subgriddiv=1,griddots=1,gridlabels=0pt](10,10)
\psaxes[labels=none,ticks=none](0,0)(0,0)(10.5,10.5)
\psline(5.25,10.5)(0,0)(10.5,3.5)
\psline(0,0)(10.5,10.5)
\psline[linewidth=2pt]{->}(0,0)(0,1)
\psline[linewidth=2pt]{->}(0,0)(1,1)
\psline[linewidth=2pt]{->}(0,0)(1,2)
\psline[linewidth=2pt]{->}(0,0)(3,1)
\psline[linewidth=2pt]{->}(0,0)(1,0)
\psdot[dotstyle=o](0,0)
\uput{3pt}[ul](10.5,0){$\delta_1$}
\rput(7,1.167){$\delta_2$}
\uput{3pt}[ul](10.5,3.5){$\delta_3$}
\rput(7,4.667){$\delta_4$}
\uput{3pt}[dl](9.5,10.5){$\delta_5$}
\rput(5.25,7){$\delta_6$}
\uput{3pt}[dr](5.25,10.5){$\delta_7$}
\rput(1.75,7){$\delta_8$}
\uput{3pt}[dr](0,10.5){$\delta_9$}
\uput{3pt}[dl](0,0){$\delta_0$}
\uput{3pt}[ul](0,1){$w_5$}
\uput{1pt}[ul](1,2){$w_4$}
\uput{3pt}[2](1,1){$w_3$}
\uput{3pt}[-65](3,1){$w_2$}
\uput{3pt}[dr](1,0){$w_1$}
\end{pspicture}
}
\caption{Newton polyhedra and partitions of \Rplusn\ associated to $\I=(x^5y,x^3y^2,x^2y^5)$ and $g=x^4y^2+xy^5$}\label{fig_vb}
\end{figure}
%
%einde figuur voorbeeld
%

\begin{example}\label{voorbeeld}
Consider the monomial ideal $\I=(x^5y,x^3y^2,x^2y^5)\lhd\Zp[x,y]$ and the polynomial $g(x,y)=x^4y^2+xy^5\in\Zp[x,y]$. The Newton polyhedra \GI\ and \Gg\ and the partitions \DI, \Dg, and \DIg\ of \Rplusn\ are drawn in Figure~\ref{fig_vb}. Note that the vectors $v_1,\ldots,v_4;v_1',\ldots,v_3';w_1,\ldots,w_5$ are perpendicular to the faces associated to the rays\footnote{A ray is a cone of dimension one.} they span. One checks that $g$ is non-degenerated over \Fp\ with respect to its Newton polyhedron, if and only if $p\neq3$.
\end{example}

Let \I\ be a nonzero proper monomial ideal of $\Zp[x_1,\ldots,x_n]$ and $g$ a non-zero polynomial in $\Zp[x_1,\ldots,x_n]$ with $g(0)=0$, which is non-degenerated over \Fp\ with respect to all the faces of its Newton polyhedron. We consider the Newton polyhedra \GI\ and \Gg\ of \I\ and $g$, and the corresponding partitions \DI, \Dg, and \DIg\ of \Rplusn.

\begin{notation}
From the definition of \DIg\ and the fact that \DI\ and \Dg\ consist of disjoint sets, it follows that each cone $\delta$ in \DIg\ can be written in a unique way as the intersection $\delta=\DIt\cap\Dg(\tau')$ of a cone \DIt\ in \DI\ with a cone $\Dg(\tau')$ in \Dg. So to each cone $\delta\in\DIg$ we can associate a face $\tau$ of \GI\ and a face $\tau'$ of \Gg. For $\delta\in\DIg$, denote by \gd\ the polynomial\footnote{See Definition~\ref{def_non-degenerated}.} $g_{\tau'}$, with $\tau'$ the face of \Gg\ associated to $\delta$. Finally, for $\delta\in\DIg$, put
\begin{equation*}
\Nd=\#\left\{a\in\Fpcrossn\mid\gbard(a)=0\right\}.
\end{equation*}
\end{notation}

\begin{notation}
For $k=(k_1,\ldots,k_n)\in\Rn$, we denote
\begin{equation*}
\sigma(k)=\sum_{i=1}^nk_i.
\end{equation*}
\end{notation}

\begin{theorem}\label{theformule1}
Let \I\ and $g$ be as above. Then we have:
\begin{equation*}
\ZIgs=\sum_{\delta\in\DIg}\Ld\Sd,
\end{equation*}
with
\begin{equation*}
\Ld=p^{-n}\left((p-1)^n-\Nd\frac{p}{p+1}\right)
\end{equation*}
and
\begin{equation*}
\Sd=\sum_{k\in\Nn\cap\delta}p^{-m_{\I}(k)s-m_g(k)-\sigma(k)},
\end{equation*}
for every cone $\delta$ in \DIg.

The above formula for \Sd\ can be rewritten as a rational expression in $p^{-s}$. Consider therefore a partition of the rational cone $\delta$ into rational simplicial cones $\delta_i$, $i\in I$, without the introduction of new rays. For each $i$, let $\delta_i$ be the cone strictly positively spanned by the linearly independent, primitive vectors $k_{i,1},\ldots,k_{i,r_i}\in\Nn\setminus\{0\}$. Then
\begin{equation*}
\Sd=\sum_{i\in I}\frac{\sum_hp^{m_{\I}(h)s+m_g(h)+\sigma(h)}}{\prod_{j=1}^{r_i}(p^{m_{\I}(k_{i,j})s+m_g(k_{i,j})+\sigma(k_{i,j})}-1)},
\end{equation*}
where $h$ runs through the elements of the set
\begin{equation*}
\Zn\cap\left\{\sum\nolimits_{j=1}^{r_i}\lambda_jk_{i,j}\;\middle\vert\;0\leqslant\lambda_j<1\text{ for }j=1,\ldots,r_i\right\}.
\end{equation*}

From the formula for \Ld\ and the rational expression for \Sd, it then follows that the real parts of the candidate poles of \ZIg\ are given by the rational numbers
\begin{equation*}
-\frac{m_g(k)+\sigma(k)}{m_{\I}(k)},
\end{equation*}
for $k$ a primitive generator of a ray in \DIg.
\end{theorem}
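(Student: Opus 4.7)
The plan is to follow the Denef--Hoornaert strategy of \cite{DH01}, with the factor $|g(x)|$ absorbed into the fibre integral over each residue class modulo $p$. Decompose $\Zpn$, up to a set of measure zero, as $\bigsqcup_{k \in \Nn} E_k$ with $E_k = \{x \in \Zpn \mid \ord(x_i) = k_i,\ i=1,\ldots,n\}$. On each $E_k$ perform the substitution $x_i = p^{k_i}u_i$ with $u \in \Zpxn$; then $|dx| = p^{-\sigma(k)}|du|$, $|\I(x)| = p^{-m_{\I}(k)}$ (because $\I$ is monomial), and $g(x) = p^{m_g(k)}G_k(u)$ for some $G_k \in \Zp[u]$ with $\overline{G_k} = \overline{g_{F_g(k)}}$ in $\Fp[u]$. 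Consequently
\begin{equation*}
\ZIgs = \sum_{k \in \Nn} p^{-m_{\I}(k)s - m_g(k) - \sigma(k)} \int_{\Zpxn} |G_k(u)|\,|du|.
\end{equation*}
Since $F_g(k)$, and therefore $\overline{G_k}$, is constant as $k$ ranges over a single cone $\delta \in \DIg$, the inner integral depends only on $\delta$; regrouping accordingly reduces the first claim of the theorem to showing $\int_{\Zpxn}|G_k(u)|\,|du| = L_\delta$.

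To evaluate that integral, partition $\Zpxn$ into the residue balls $a + (p\Zp)^n$ indexed by $\bar a \in \Fpcrossn$. On the $(p-1)^n - N_\delta$ balls with $\overline{g_\delta}(\bar a) \neq 0$ the integrand equals $1$, contributing $((p-1)^n - N_\delta)/p^n$ in total. On each of the $N_\delta$ balls with $\overline{g_\delta}(\bar a) = 0$, non-degeneracy of $g$ yields (after reordering the coordinates) $(\partial g_\delta/\partial x_1)(\bar a) \neq 0$ in $\Fp$, hence $\partial G_k/\partial u_1$ is a unit of $\Zp$ throughout $a + (p\Zp)^n$. The strong $p$-adic inverse function theorem then makes $\phi: u \mapsto (G_k(u), u_2 - a_2, \ldots, u_n - a_n)$ a measure-preserving bijection of this ball onto $(G_k(a) + p\Zp) \times (p\Zp)^{n-1}$, which equals $(p\Zp)^n$ because $G_k(a) \in p\Zp$. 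The singular-ball contribution is thus $p^{-(n-1)}\int_{p\Zp}|v_1|\,|dv_1|$, and a short direct computation by iterating $\Zp = \Zpx \sqcup p\Zp$ gives $\int_{p\Zp}|v_1|\,|dv_1| = 1/(p(p+1))$. Summing over the $N_\delta$ singular balls and adding the regular contribution yields precisely $L_\delta$.

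For the rational form of $S_\delta$, fix a simplicial decomposition $\delta = \bigsqcup_{i \in I} \delta_i$ without new rays, supplied by the first lemma recalled in Section~\ref{prem}. Every $k \in \delta_i \cap \Zn$ has a unique expression $k = h + \sum_{j=1}^{r_i} m_j k_{i,j}$ with $(m_j) \in \N^{r_i}$ and $h$ in the shifted half-open parallelepiped $\{\sum_j \nu_j k_{i,j} : 0 < \nu_j \le 1\} \cap \Zn$ (the standard enumeration of integer points of a relatively open simplicial cone). Writing $A(k) := m_{\I}(k)s + m_g(k) + \sigma(k)$, linearity of $A$ on $\delta_i$ together with $\sum_{m \ge 0} p^{-mA(k_{i,j})} = 1/(1 - p^{-A(k_{i,j})})$ produces a preliminary rational expression. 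The reflection $h \leftrightarrow h' := \sum_j k_{i,j} - h$ bijects the shifted and standard parallelepipeds, and, using $A(h) = \sum_j A(k_{i,j}) - A(h')$, rewrites $p^{-A(h)}$ as $p^{A(h')}/\prod_j p^{A(k_{i,j})}$; the remaining $p^{A(k_{i,j})}$ combines with $1 - p^{-A(k_{i,j})}$ to give $p^{A(k_{i,j})} - 1$, yielding exactly the formula in the theorem. The candidate-pole statement is then immediate: each denominator factor $p^{A(k_{i,j})} - 1$ vanishes only when $A(k_{i,j}) \in (2\pi i/\log p)\Z$, whose real $s$-roots are $-(m_g(k_{i,j}) + \sigma(k_{i,j}))/m_{\I}(k_{i,j})$, and the $k_{i,j}$ appearing in denominators are precisely the primitive ray generators of $\DIg$.

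The main obstacle is the residue-ball computation: one must verify that a nonzero mod-$p$ partial derivative of $g_\delta$ at $\bar a$ lifts to a \emph{unit} Jacobian of $\phi$ on the entire ball $a + (p\Zp)^n$ (it does, since the reduction modulo $p$ of a partial derivative is constant on such a ball) and then invoke the strong $p$-adic inverse function theorem in the form that guarantees $\phi$ maps $a + (p\Zp)^n$ bijectively onto a full product ball, so that $\int|G_k|\,|du|$ genuinely collapses to the one-dimensional integral $\int_{p\Zp}|v_1|\,|dv_1|$. The remainder is bookkeeping with the linearity of $m_{\I}$, $m_g$, $\sigma$ on cones and the standard combinatorics of simplicial-cone decompositions.
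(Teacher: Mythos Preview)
Your proof is correct and follows essentially the same strategy as the paper's own proof: decompose by $\ord x = k$, substitute $x_i = p^{k_i}u_i$, group the resulting terms by cones $\delta\in\DIg$, and identify the fibre integral over $\Zpxn$ with $L_\delta$. The paper is terser---it simply cites the proof of \cite[Theorem~4.2]{DH01} for both the residue-ball computation of $L_\delta$ and the rational form of $S_\delta$---whereas you spell out the change-of-variables argument (your map $\phi$ is exactly the one Denef--Hoornaert use in their Lemma~3.2/Proposition~3.1) and the parallelepiped reflection $h\leftrightarrow\sum_j k_{i,j}-h$ explicitly; but the underlying argument is the same.
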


\begin{proof}
The proof is similar to the proofs of \cite[Theorem~4.2]{DH01} and \cite[Proposition~2.1]{HMY07}. First we divide the integration domain \Zpn, based on the partition \DIg\ of \Rplusn\ associated to \I\ and $g$:
\begin{align*}
\ZIgs&=\int_{\Zpn}|\I(x)|^s|g(x)||dx|\\
&=\sum_{\delta\in\DIg}\sum_{k\in\Nn\cap\delta}\int_{\substack{x\in\Zpn\\\ord x=k}}|\I(x)|^s|g(x)||dx|.
\end{align*}
Recall that $|\I(x)|=\max\{|f(x)|\mid f\in \I\}=p^{-\min\{\ord f(x)\mid f\in \I\}}$. It's easy to see that this minimum is always attained in one of the monic monomials that generate \I. If $x^{\omega}$ is a monomial in $\Zp[x_1,\ldots,x_n]$, then for $x\in\Zpn$ with $\ord x=k$, the order of $x^{\omega}$ equals $k\cdot\omega$. So, since $\min\{k\cdot\omega\mid \omega\in\supp(I)\}=m_{\I}(k)$, it follows that
\begin{equation*}
\ZIgs=\sum_{\delta\in\DIg}\sum_{k\in\Nn\cap\delta}p^{-m_{\I}(k)s}\int_{\substack{x\in\Zpn\\\ord x=k}}|g(x)||dx|.
\end{equation*}
From the proof of \cite[Theorem 4.2]{DH01}, it follows that the integral in the previous equation is equal to $p^{-m_g(k)-\sigma(k)}\Ld$. Hence
\begin{equation*}
\ZIgs=\sum_{\delta\in\DIg}\sum_{k\in\Nn\cap\delta}p^{-m_{\I}(k)s}p^{-m_g(k)-\sigma(k)}\Ld,
\end{equation*}
and since \Ld\ is independent of $k$, we find
\begin{align*}
\ZIgs&=\sum_{\delta\in\DIg}\Ld\sum_{k\in\Nn\cap\delta}p^{-m_{\I}(k)s-m_g(k)-\sigma(k)}\\
&=\sum_{\delta\in\DIg}\Ld\Sd.
\end{align*}

For the easy proof of the rational formula for \Sd, we refer to \cite[Theorem~4.2]{DH01}. Essential here is the fact that the functions $m_{\I}$ and $m_g$ are linear on each $\delta$.
\end{proof}

\begin{example}[Continuation of Example~\ref{voorbeeld}] We will now calculate Igusa's local zeta function \ZIg\ for the monomial ideal \I\ and the polynomial $g$ from Example~\ref{voorbeeld}. One can verify that $g$ is non-degenerated over \Fp\ with respect to \Gg, if and only if $p\neq3$. So we restrict to this case and use the formula from Theorem~\ref{theformule1} to calculate \ZIgs.

%
%begin tabel 2 (stralen)
%
\begin{table}
\centering
{
\setlength{\belowrulesep}{.65ex}
\setlength{\aboverulesep}{.4ex}
\setlength{\belowbottomsep}{1.3ex}
\setlength{\defaultaddspace}{.5em}
\begin{tabular}{*{5}{l}}\toprule
primitive generator & \multirow{2}*{$m_{\I}(w)$} & \multirow{2}*{$m_g(w)$} & \multirow{2}*{$\sigma(w)$} & real candidate pole\\
$w$ of a ray in \DIg & & & & associated to $w$\\\midrule
$w_1(1,0)$ & $2$ & $1$ & $1$ & $-1$\\\addlinespace
$w_2(3,1)$ & $11$ & $8$ & $4$ & $-12/11$\\\addlinespace
$w_3(1,1)$ & $5$ & $6$ & $2$ & $-8/5$\\\addlinespace
$w_4(1,2)$ & $7$ & $8$ & $3$ & $-11/7$\\\addlinespace
$w_5(0,1)$ & $1$ & $2$ & $1$ & $-3$\\\midrule[\heavyrulewidth]
integral point $h$ & $m_{\I}(h)$ & $m_g(h)$ & $\sigma(h)$ & \\\cmidrule{1-4}
$(2,1)$ & $8$ & $7$ & $3$ & \\
\cmidrule[\heavyrulewidth]{1-4}\addlinespace[\belowrulesep]
\end{tabular}
}
\caption{Data associated to $(2,1)$ and the primitive generators of the rays in \DIg}\label{tabelrays}
\end{table}
%
%einde tabel 2
%

%
%begin tabel 1 (kegels)
%
\begin{table}
\centering
{
\setlength{\belowrulesep}{.8ex}
\setlength{\aboverulesep}{.7ex}
\setlength{\belowbottomsep}{1.3ex}
\setlength{\defaultaddspace}{.57em}
\begin{tabular}{*{7}{l}}\toprule
cone $\delta$ & $\dim$ & primitive & $\mult$ & \multirow{2}*{\Nd} & \multirow{2}*{\Ld} & \multirow{2}*{\Sd}\\
of \DIg & $\delta$ & generators & $\delta$ & & & \\\midrule
$\delta_0$ & $0$ & -- & $1$ & $N_g$ & $\bigl(\frac{p-1}{p}\bigr)^2-\frac{N_g}{p(p+1)}$ & $1$\\\addlinespace
$\delta_1$ & $1$ & $(1,0)$ & $1$ & $0$ & $\bigl(\frac{p-1}{p}\bigr)^2$ & $\frac{1}{p^{2s+2}-1}$\\\addlinespace
$\delta_2$ & $2$ & $(1,0),(3,1)$ & $1$ & $0$   & $\bigl(\frac{p-1}{p}\bigr)^2$ & $\frac{1}{(p^{2s+2}-1)(p^{11s+12}-1)}$\\\addlinespace
$\delta_3$ & $1$ & $(3,1)$ & $1$ & $0$ & $\bigl(\frac{p-1}{p}\bigr)^2$ & $\frac{1}{p^{11s+12}-1}$\\\addlinespace
$\delta_4$ & $2$ & $(3,1),(1,1)$ & $2$ & $0$ & $\bigl(\frac{p-1}{p}\bigr)^2$ & $\frac{1+p^{8s+10}}{(p^{11s+12}-1)(p^{5s+8}-1)}$\\\addlinespace
$\delta_5$ & $1$ & $(1,1)$ & $1$ & $N_g$ & $\bigl(\frac{p-1}{p}\bigr)^2-\frac{N_g}{p(p+1)}$ & $\frac{1}{p^{5s+8}-1}$\\\addlinespace
$\delta_6$ & $2$ & $(1,1),(1,2)$ & $1$ & $0$ & $\bigl(\frac{p-1}{p}\bigr)^2$ & $\frac{1}{(p^{5s+8}-1)(p^{7s+11}-1)}$\\\addlinespace
$\delta_7$ & $1$ & $(1,2)$ & $1$ & $0$ & $\bigl(\frac{p-1}{p}\bigr)^2$ & $\frac{1}{p^{7s+11}-1}$\\\addlinespace
$\delta_8$ & $2$ & $(1,2),(0,1)$ & $1$ & $0$ & $\bigl(\frac{p-1}{p}\bigr)^2$ & $\frac{1}{(p^{7s+11}-1)(p^{s+3}-1)}$\\\addlinespace
$\delta_9$ & $1$ & $(0,1)$ & $1$ & $0$ & $\bigl(\frac{p-1}{p}\bigr)^2$ & $\frac{1}{p^{s+3}-1}$\\\bottomrule
\end{tabular}
}
\caption{Data associated to the cones of \DIg}\label{tabelkegels}
\end{table}
%
%einde tabel 1
%

Tables~\ref{tabelrays} and \ref{tabelkegels} show the data related to the cones in \DIg\ and the primitive generators of its rays, needed to \lq fill in\rq\ our formula. The number $N_g$, appearing in Table~\ref{tabelkegels}, is the number of elements in the locus $\{(x,y)\in(\Fpcross)^2\mid\overline{g}(x,y)=x^4y^2+xy^5=0\}$. One can check that this number equals
\begin{equation*}
N_g=
\begin{cases}
3(p-1),&\text{if $p\equiv1,7\bmod12$;}\\
p-1,&\text{if $p\equiv2,5,11\bmod12$.}
\end{cases}
\end{equation*}
The calculations are relatively easy, because, since we work in dimension $2$, all cones $\delta\in\DIg$ are simplicial. Moreover, all cones are simple, except $\delta_4$. Consequently, we have to take the single point
\begin{equation*}
(2,1)\in\Zn\cap\{\lambda(3,1)+\mu(1,1)\mid0\leqslant\lambda,\mu<1\}
\end{equation*}
into account, in the calculation of $S_{\delta_4}$.

For $p\equiv1,7\bmod12$, we find
\begin{equation*}
\ZIgs=\sum_{\delta\in\DIg}\Ld\Sd=\frac{p^6(p-1)A(p,t)}{(p+1)(p^2-t^2)(p^{12}-t^{11})(p^8-t^5)(p^{11}-t^7)(p^3-t)},
\end{equation*}
where $t=p^{-s}$ and $A(p,t)$ is the following polynomial in $p$ and $t$:
\begin{multline*}
A(p,t)=(-p^{2}-3p+1)t^{21}+(p^{5}+3p^{4}-p^{3})t^{20}+(-p^{5}+p^{4}+4p^{3}-p^{2})t^{19}\\*
\shoveright{+(-p^{7}-3p^{6}+p^{5}-p^{4}+p^{2})t^{18}+(2p^{7}-2p^{5})t^{17}+(p^{6}-p^{4})t^{16}}\\*
\shoveright{+(-p^{9}+p^{7}-p^{6}+p^{4})t^{15}+(p^{13}+3p^{12}-p^{11}+p^{9}-p^{7})t^{14}-3p^{15}t^{13}}\\*
\shoveright{+(-p^{15}-3p^{14}+p^{13})t^{12}+(3p^{17}+p^{15}-p^{13})t^{11}+(-p^{18}+p^{16}+p^{14}+3p^{13}-p^{12})t^{10}}\\*
\shoveright{+(-2p^{17}-3p^{16}+2p^{15})t^{9}+(p^{20}-p^{18}+2p^{17}-5p^{15})t^{8}+(-p^{20}+4p^{18})t^{7}}\\*
+(-p^{19}+p^{17})t^{6}+(-p^{25}-3p^{24}+p^{23})t^{3}+3p^{27}t^{2}+3p^{26}t+p^{30}-3p^{29}-p^{28}.
\end{multline*}
Note that none of the candidate poles cancel in this case. For $p\equiv2,5,11\bmod12$, we find a similar result.
\end{example}

\section{Computation of an important integral using Hensel's Lemma}\label{sectHensel}
In order to prove our second main theorem in the next section, we will need a formula for the integral
\begin{equation*}
\int_{\Zpxn}|f(x)|^s|g(x)||dx|,
\end{equation*}
where $f$ and $g$ are polynomials over \Zp\ in $n$ variables. In Corollary~\ref{corol3} we will find a reasonable formula under a number of non-degeneracy conditions on $f$ and $g$ that will be explained further on in this section. The result demands some preparation involving Hensel's Lemma.

\begin{lemma}[Hensel's lemma in several variables]\label{lemma_hensel}
Let $f=(f_1,\ldots,f_n)$ be an $n$-tuple of polynomials $f_i$ in $\Zp[x_1,\ldots,x_n]$. Denote by $J(f)=|\partial f_i/\partial x_j|_{i,j}$ the Jacobian determinant of $f$. Let $k\in\Nnul$, let $a=(a_1,\ldots,a_n)\in\Zpn$ be a $k$-approximate root of $f$ (i.e., $f_i(a)\equiv 0\bmod p^k$ for all $i$), and suppose that $J(f,a)\not\equiv 0\bmod p$. Then there exists a unique root $a'\in\Zpn$ of $f$ near $a$ (i.e., $f_i(a')=0$ and $a'_i\equiv a_i\bmod p^k$ for all $i$).
\end{lemma}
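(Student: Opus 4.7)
The plan is a $p$-adic Newton iteration, i.e.~the standard multivariate Hensel argument. The essential tool is the polynomial Taylor expansion: for any $a,h\in\Zpn$ and each $f_i\in\Zp[x_1,\ldots,x_n]$,
\begin{equation*}
f_i(a+h)=f_i(a)+\sum_{j=1}^n\frac{\partial f_i}{\partial x_j}(a)\,h_j+R_i(a,h),
\end{equation*}
where $R_i(a,h)$ is a polynomial in the entries of $h$ with coefficients in $\Zp$, each monomial of which has degree at least two in $h$. Hence if $h\in(p^N\Zp)^n$, then $R_i(a,h)\in p^{2N}\Zp$. Writing $f$ and $R$ as column vectors, this gives
\begin{equation*}
f(a+h)\equiv f(a)+M(a)\cdot h\bmod p^{2N},
\end{equation*}
where $M(x)=(\partial f_i/\partial x_j(x))_{i,j}$ denotes the Jacobian matrix, so that $J(f,x)=\det M(x)$.

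Starting from $a^{(0)}=a$, I would define the Newton iterates by
\begin{equation*}
a^{(m+1)}=a^{(m)}-M(a^{(m)})^{-1}\cdot f(a^{(m)}).
\end{equation*}
Applying the same Taylor argument to the entries of $M$ shows that whenever $a^{(m)}\equiv a\bmod p^k$, each entry of $M(a^{(m)})$ is congruent to the corresponding entry of $M(a)$ modulo $p^k$; in particular $\det M(a^{(m)})\equiv J(f,a)\bmod p$ is a unit in $\Zp$, so $M(a^{(m)})^{-1}$ exists and has entries in $\Zp$. By induction on $m$ I would then show that $a^{(m)}\in\Zpn$, that $f(a^{(m)})\in(p^{2^m k}\Zp)^n$, and that $a^{(m+1)}-a^{(m)}\in(p^{2^m k}\Zp)^n$. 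Indeed, given the inductive hypotheses, the increment $h^{(m)}=a^{(m+1)}-a^{(m)}=-M(a^{(m)})^{-1}f(a^{(m)})$ lies in $(p^{2^m k}\Zp)^n$, and applying the Taylor formula with $N=2^m k$ yields
\begin{equation*}
f(a^{(m+1)})=f(a^{(m)})+M(a^{(m)})h^{(m)}+R^{(m)}=R^{(m)}\in(p^{2^{m+1}k}\Zp)^n,
\end{equation*}
the cancellation being exactly the defining property of Newton's step. Thus $(a^{(m)})_m$ is Cauchy in $\Zpn$, with limit $a'\in\Zpn$ satisfying $f(a')=0$ and $a'\equiv a\bmod p^k$.

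For uniqueness, suppose $a''\in\Zpn$ also satisfies $f(a'')=0$ and $a''\equiv a\bmod p^k$. Setting $h=a''-a'\in(p^k\Zp)^n$, the Taylor formula gives $0=f(a'')=M(a')h+R$ with $R\in(p^{2k}\Zp)^n$. Since $M(a')$ is invertible over $\Zp$ by the same argument as above, we conclude $h\in(p^{2k}\Zp)^n$. Iterating the bootstrap shows $h\in(p^N\Zp)^n$ for every $N$, whence $h=0$.

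The main technical point is coupling, all along the iteration, the quadratic decrease of $f(a^{(m)})$ with the persistent invertibility of $M(a^{(m)})$ over $\Zp$; once both are tracked through the Taylor expansion, the existence and uniqueness statements follow with little further work.
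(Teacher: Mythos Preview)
Your Newton-iteration argument is correct and self-contained: the Taylor expansion with integral remainder, the persistence of the unit Jacobian along the sequence, the quadratic decay $f(a^{(m)})\in(p^{2^mk}\Zp)^n$, and the uniqueness bootstrap all go through as written.

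The paper takes a shorter route. It cites Bourbaki for the base case $k=1$ and then reduces the general statement to that case via the substitution $g(y)=p^{-(k-1)}f(a+p^{k-1}y)$: one checks that $g$ has coefficients in $\Zp$, that $y=0$ is a $1$-approximate root of $g$, and that $J(g,0)=J(f,a)$ is a unit modulo $p$; the $k=1$ case then produces the unique root $y'\equiv0\bmod p$, and $a'=a+p^{k-1}y'$ is the sought root. Your approach has the advantage of being independent of an external reference and of exhibiting the quadratic convergence explicitly; the paper's approach is more economical, isolating the analytic work in the cited $k=1$ case and handling the general $k$ by a one-line algebraic reduction. In effect you have reproved (a version of) the Bourbaki result and absorbed the reduction step into the iteration itself.
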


\begin{proof}
For the case $k=1$, we refer to \cite[Section 4.6, Theorem 2]{Bourbaki}. The result for $k>1$ can be proved by applying the statement for $k=1$ to the polynomial $g(y)=p^{-(k-1)}f(a+p^{k-1}y)$.
\end{proof}

\begin{corollary}\label{corol}
Cfr.\ \cite[Corollary 3.4]{DH01}. Let $f(x)\in\Zp[x]$ and $k,l\in\Nnul$ with $k\geqslant l$. Let $a\in\Zp$ such that $f(a)\equiv0\bmod p^l$ and $f'(a)\not\equiv0\bmod p$. Then there exists an element $a'\in\Zp$ such that
\begin{equation*}
\{x\in a+p^l\Zp\mid f(x)\equiv0\bmod p^k\}=a'+p^k\Zp.
\end{equation*}
\end{corollary}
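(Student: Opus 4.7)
The plan is to invoke Lemma~\ref{lemma_hensel} in the one-variable setting ($n=1$) to construct the desired $a'$, and then verify the set equality by two inclusions. Since $f(a)\equiv0\bmod p^l$ means $a$ is an $l$-approximate root of $f$, and since $f'(a)\not\equiv0\bmod p$, the lemma supplies a unique $a'\in\Zp$ with $f(a')=0$ and $a'\equiv a\bmod p^l$. This $a'$ will be the element claimed by the corollary.

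For the inclusion $a'+p^k\Zp\subseteq\{x\in a+p^l\Zp\mid f(x)\equiv0\bmod p^k\}$, I would expand any $x=a'+p^k y$ via Taylor's formula around $a'$, obtaining $f(x)=f(a')+p^k y f'(a')+p^{2k}r(y)$ for some $r(y)\in\Zp$, which reduces to $p^k y f'(a')+p^{2k}r(y)$ because $f(a')=0$; hence $f(x)\equiv 0\bmod p^k$. The condition $k\geqslant l$ combined with $a'\equiv a\bmod p^l$ gives $x\in a+p^l\Zp$.

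For the reverse inclusion, take any $x\in a+p^l\Zp$ with $f(x)\equiv0\bmod p^k$. Since $l\geqslant1$, we have $x\equiv a\bmod p$, so $f'(x)\equiv f'(a)\not\equiv 0\bmod p$. Thus $x$ itself is a $k$-approximate root of $f$ with non-vanishing derivative modulo $p$, and a second application of Lemma~\ref{lemma_hensel} yields a unique genuine root $a''\in\Zp$ of $f$ with $a''\equiv x\bmod p^k$. Because $k\geqslant l$, this $a''$ automatically satisfies $a''\equiv a\bmod p^l$, and so by the uniqueness clause of the first application of Hensel's Lemma we must have $a''=a'$. Consequently $x\equiv a'\bmod p^k$, as required.

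The only real subtlety, and hence the main (modest) obstacle, is to keep the two uniqueness statements straight: the first use of Hensel's Lemma furnishes the unique exact root of $f$ in the coset $a+p^l\Zp$, while the second use furnishes the unique exact root in the (smaller) coset $x+p^k\Zp$; the hypothesis $k\geqslant l$ is precisely what lets us identify the second root with the first. Once this is accounted for, the argument is essentially a Taylor expansion together with routine $p$-adic bookkeeping.
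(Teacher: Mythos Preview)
Your proof is correct and follows exactly the approach of the paper: invoke Lemma~\ref{lemma_hensel} to produce the unique exact root $a'\equiv a\bmod p^l$, then verify the set equality. The paper merely says ``one can verify that $a'$ satisfies the required condition''; you have supplied that verification in full, and the two applications of Hensel's Lemma together with the use of $k\geqslant l$ to identify the two roots are precisely what is needed.
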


\begin{proof}
By the previous lemma, it follows that there exists a unique root $a'\in\Zp$ of $f$ which is congruent to $a$ modulo $p^l$. One can verify that $a'$ satisfies the required condition.
\end{proof}

\begin{corollary}\label{corol2}
Let $f,g\in\Zp[x_1,x_2]$ and $k\in\Nnul$. Let $a=(a_1,a_2)\in\Zp^2$ such that $f(a)\equiv g(a)\equiv0\bmod p$ and $|J((f,g),a)|\not\equiv0\bmod p$. Then there exists an element $a'=(a_1',a_2')\in\Zp^2$ such that
\begin{equation*}
A_{k,k}=\{x=(x_1,x_2)\in a+(p\Zp)^2\mid f(x)\equiv g(x)\equiv0\bmod p^k\}=a'+(p^k\Zp)^2.
\end{equation*}
\end{corollary}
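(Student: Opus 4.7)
The plan is to apply Lemma~\ref{lemma_hensel} (Hensel's Lemma in several variables) twice: once with $k=1$ to produce the candidate $a'$, and then once more at each point of $A_{k,k}$ to show that $a'$ accounts for every such point.

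First I would invoke Hensel's Lemma on the pair $(f,g)$ with approximation parameter $1$: since $f(a)\equiv g(a)\equiv 0\bmod p$ and $J(f,g,a)\not\equiv 0\bmod p$, the lemma produces a unique $a'=(a_1',a_2')\in\Zp^2$ with $f(a')=g(a')=0$ and $a'\equiv a\bmod p$. This $a'$ will be the element asserted in the statement. The inclusion $a'+(p^k\Zp)^2\subseteq A_{k,k}$ is then immediate: any $x\equiv a'\bmod p^k$ satisfies $f(x)\equiv f(a')=0\bmod p^k$ (and similarly for $g$) by the Taylor expansion / the fact that $f,g$ have coefficients in \Zp, and $x\equiv a'\equiv a\bmod p$ so $x\in a+(p\Zp)^2$.

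For the reverse inclusion $A_{k,k}\subseteq a'+(p^k\Zp)^2$, I would take an arbitrary $x\in A_{k,k}$. Then $x$ is a $k$-approximate root of $(f,g)$. Moreover $x\equiv a\bmod p$, so $J(f,g,x)\equiv J(f,g,a)\not\equiv 0\bmod p$. Applying Lemma~\ref{lemma_hensel} a second time — this time to the $k$-approximate root $x$ — yields a unique exact root $\tilde x\in\Zp^2$ of $(f,g)$ with $\tilde x\equiv x\bmod p^k$. Since $\tilde x\equiv x\equiv a\bmod p$, the uniqueness clause of the first application of Hensel's Lemma forces $\tilde x=a'$, and hence $x\equiv a'\bmod p^k$, i.e.\ $x\in a'+(p^k\Zp)^2$.

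Combining both inclusions gives $A_{k,k}=a'+(p^k\Zp)^2$. There is no real obstacle here; the main subtlety is simply to keep the two uniqueness statements from Hensel's Lemma separated, and to observe that the Jacobian non-vanishing condition propagates from $a$ to any $x$ in the same residue class mod $p$, which is what legitimises the second application of the lemma.
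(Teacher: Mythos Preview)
Your proof is correct and follows essentially the same approach as the paper: both obtain the root $a'$ from Hensel's Lemma (Lemma~\ref{lemma_hensel}) and then verify that $A_{k,k}=a'+(p^k\Zp)^2$. The paper merely asserts that ``one can verify that $a'$ satisfies the required condition'' (mirroring the proof of Corollary~\ref{corol}), whereas you spell out both inclusions explicitly, in particular the nice use of a second application of Hensel's Lemma at level $k$ combined with the uniqueness from the first application to establish $A_{k,k}\subseteq a'+(p^k\Zp)^2$.
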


\begin{proof}
This follows in the same way as Corollary~\ref{corol} from Lemma~\ref{lemma_hensel}.
\end{proof}

It follows that the set $A_{k,k}\subset\Zp^2$ has measure $p^{-2k}$. We will need the following generalization of this result.

\begin{lemma}\label{notAkl}
Let $n\geqslant2$ and $f,g\in\Zp[x_1,\ldots,x_n]$. Let $k,l\in\Nnul$ with $k\geqslant l$. Let $a=(a_1,\ldots,a_n)\in\Zp^n$ such that $f(a)\equiv g(a)\equiv0\bmod p$ and such that the Jacobian matrix $J((f,g),a)$ has rank $2$ modulo $p$. Then the measure of the set
\begin{equation*}
A_{k,l}=\{x=(x_1,\ldots,x_n)\in a+(p\Zp)^n\mid f(x)\equiv0\bmod p^k\ \textrm{and}\ g(x)\equiv0\bmod p^l\}
\end{equation*}
equals $p^{-n-k-l+2}$.
\end{lemma}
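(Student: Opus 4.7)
The plan is to reduce the $n$-variable statement to a two-variable computation by Fubini, and then handle the two-variable case by combining Corollary~\ref{corol2} (which gives the symmetric case $k=l$) with a one-dimensional Hensel step to account for the extra factor $p^{-(k-l)}$.

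Since $J(f,g,a)$ has rank $2$ modulo $p$, there exist two indices $i_1<i_2$ such that the corresponding $2\times2$ minor of $J(f,g,a)$ is a unit in $\Zp$. After relabeling the coordinates we may assume $i_1=1,i_2=2$. Write $x=(x_1,x_2,y)$ with $y=(x_3,\dots,x_n)$ and $a=(a_1,a_2,a_y)$. By Fubini,
\begin{equation*}
\mu(A_{k,l})=\int_{a_y+(p\Zp)^{n-2}}\mu_2\bigl(B_{k,l}(y)\bigr)\,|dy|,
\end{equation*}
where $B_{k,l}(y)=\{(x_1,x_2)\in(a_1,a_2)+(p\Zp)^2\mid f_y(x_1,x_2)\equiv0\bmod p^k,\ g_y(x_1,x_2)\equiv0\bmod p^l\}$ and $f_y,g_y\in\Zp[x_1,x_2]$ denote $f$ and $g$ evaluated at $y$. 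For every $y\equiv a_y\bmod p$ we have $f_y(a_1,a_2)\equiv f(a)\equiv0$ and $g_y(a_1,a_2)\equiv g(a)\equiv0\bmod p$, and the $2\times2$ Jacobian of $(f_y,g_y)$ at $(a_1,a_2)$ is congruent modulo $p$ to the chosen minor of $J(f,g,a)$, hence is a unit modulo $p$. Therefore it suffices to prove that $\mu_2(B_{k,l}(y))=p^{-k-l}$ for each such $y$; integrating then yields $p^{-(n-2)}\cdot p^{-k-l}=p^{-n-k-l+2}$, as required.

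For the two-variable claim, apply Corollary~\ref{corol2} to $f_y,g_y$ with exponent $l$ (noting that the hypothesis $k\geqslant l$ means $B_{l,l}(y)\supset B_{k,l}(y)$). This gives an element $a'=(a_1',a_2')\in\Zp^2$ with $B_{l,l}(y)=a'+(p^l\Zp)^2$, and in particular $f_y(a')=g_y(a')=0$. Parametrize points of $B_{l,l}(y)$ as $x=a'+p^l z$ with $z=(z_1,z_2)\in\Zp^2$. Then
\begin{equation*}
f_y(a'+p^lz)=p^l\bigl(\alpha_1 z_1+\alpha_2 z_2\bigr)+p^{2l}R(z),
\end{equation*}
with $\alpha_i=(\partial f_y/\partial x_i)(a')$ and $R\in\Zp[z_1,z_2]$. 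Because the $2\times2$ Jacobian of $(f_y,g_y)$ at $a'$ is still a unit modulo $p$ (this is a consequence of the uniqueness statement in Hensel's Lemma applied to its derivation), at least one of $\alpha_1,\alpha_2$ is a unit in $\Zp$; say $\alpha_1\in\Zp^{\times}$. The condition $f_y(x)\equiv0\bmod p^k$ is then equivalent to $\alpha_1z_1+\alpha_2z_2+p^lR(z)\equiv0\bmod p^{k-l}$, and the left-hand side has derivative $\alpha_1+p^l(\partial R/\partial z_1)$ with respect to $z_1$, which is a unit modulo $p$. Hence Corollary~\ref{corol} (applied for each fixed $z_2\in\Zp$) shows that the set of admissible $z_1\in\Zp$ has measure $p^{-(k-l)}$, and Fubini over $z_2\in\Zp$ gives $\mu_2\{z\in\Zp^2\mid f_y(a'+p^lz)\equiv0\bmod p^k\}=p^{-(k-l)}$. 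Multiplying by the Jacobian factor $p^{-2l}$ from the substitution $x=a'+p^lz$ yields $\mu_2(B_{k,l}(y))=p^{-2l}\cdot p^{-(k-l)}=p^{-k-l}$.

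The main obstacle I anticipate is purely bookkeeping: making sure that after the substitution $x=a'+p^lz$ the partial-derivative matrix remains of full rank modulo $p$ (which is automatic from the Hensel uniqueness used to produce $a'$) and then recognizing that only one of the two Hensel lifts (the one corresponding to the $x_1$-variable after the linear change) is needed, while the other variable is truly free. No substantial new technique beyond Hensel's Lemma and Fubini is required.
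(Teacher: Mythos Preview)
Your proposal is correct and follows essentially the same strategy as the paper's proof: reduce to two variables using the nonvanishing $2\times2$ minor, apply Corollary~\ref{corol2} at level $l$ to locate the solution set as a coset, and then use a one-variable Hensel step (Corollary~\ref{corol}) to pass from level $l$ to level $k$ for $f$. The only cosmetic difference is packaging: the paper works in the original coordinates, fixing $a_3',\ldots,a_n'$ and then $a_2''$ and explicitly exhibiting $A_{k,l}$ as a disjoint union of $(p^k\Zp)^n$-cosets which it counts, whereas you phrase the same computation via Fubini and the substitution $x=a'+p^l z$. One small point worth tightening: when you invoke Corollary~\ref{corol} for the polynomial $h(z_1)=\alpha_1z_1+\alpha_2z_2+p^lR(z)$, you implicitly use that $h$ has a \emph{unique} residue class modulo $p$ on which it vanishes (since $\overline{h}$ is affine-linear with unit slope), so that the set of admissible $z_1$ in all of $\Zp$ really has measure $p^{-(k-l)}$ and not more; this is immediate but should be said.
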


\begin{proof}
Since $J((f,g),a)$ has rank $2$ modulo $p$, we can assume, without loss of generality, that
\begin{equation*}
\begin{vmatrix}
{\ds\frac{\partial f}{\partial x_1}(a)}&{\ds\frac{\partial f}{\partial x_2}(a)}\\[+2ex]
{\ds\frac{\partial g}{\partial x_1}(a)}&{\ds\frac{\partial g}{\partial x_2}(a)}
\end{vmatrix}
\not\equiv0\bmod p.
\end{equation*}
This also implies that $(\partial f/\partial x_1)(a)$ and $(\partial f/\partial x_2)(a)$ are not both congruent to zero modulo $p$. Let us again assume that $(\partial f/\partial x_1)(a)\not\equiv0\bmod p$.

Fix $a_3',\ldots,a_n'\in\Zp$ with $a_i'\equiv a_i\bmod p$ for $i=3,\ldots,n$. Applying Corollary~\ref{corol2} to the polynomials $f(x_1,x_2,a_3',\ldots,a_n')$ and $g(x_1,x_2,a_3',\ldots,a_n')$ in the variables $x_1,x_2$ and to $(a_1,a_2)\in\Zp^2$, we find that there exist $a_1',a_2'\in\Zp$, depending on $a_3',\ldots,a_n'$, such that for all $x_3,\ldots,x_n\in\Zp$, satisfying $x_i\equiv a_i'\bmod p^k$ for $i=3,\ldots,n$, one has
\begin{multline*}
\{(x_1,x_2)\in(a_1,a_2)+(p\Zp)^2\mid f(x_1,\ldots,x_n)\equiv g(x_1,\ldots,x_n)\equiv0\bmod p^l\}\\=(a_1',a_2')+(p^l\Zp)^2.
\end{multline*}

Consider now $a_2''\in\Zp$ with $a_2''\equiv a_2'\bmod p^l$. Applying Corollary~\ref{corol} to the polynomial $f(x_1,a_2'',a_3',\ldots,a_n')\in\Zp[x_1]$ and $a_1'\in\Zp$, we find an element $a_1''\in\Zp$, depending on $a_2'',a_3',\ldots,a_n'$, such that for all $x_2,\ldots,x_n\in\Zp$, satisfying $x_2\equiv a_2'',x_i\equiv a_i'\bmod p^k$ for $i=3,\ldots,n$, we have
\begin{equation*}
\{x_1\in a_1'+p^l\Zp\mid f(x_1,\ldots,x_n)\equiv0\bmod p^k\}=a_1''+p^k\Zp.
\end{equation*}

These two considerations show that the set
\begin{equation*}
A_{k,l}=\{x=(x_1,\ldots,x_n)\in a+(p\Zp)^n\mid f(x)\equiv0\bmod p^k\ \textrm{and}\ g(x)\equiv0\bmod p^l\}
\end{equation*}
equals the union
\begin{equation}\label{union}
\bigcup_{\substack{a_3'+p^k\Zp,\ldots,a_n'+p^k\Zp\\a_i'\equiv a_i\bmod p}}\ \ \bigcup_{\substack{a_2''+p^k\Zp\\a_2''\equiv a_2'\bmod p^l}}(a_1'',a_2'',a_3',\ldots,a_n')+(p^k\Zp)^n.
\end{equation}
Hereby the first union is taken over all elements $(a_3'+p^k\Zp,\ldots,a_n'+p^k\Zp)\in(\Zp/p^k\Zp)^{n-2}$ with $a_i'\equiv a_i\bmod p$ for $i=3,\ldots,n$; the second union is over all the cosets $a_2''+p^k\Zp\in\Zp/p^k\Zp$ with $a_2''\equiv a_2'\bmod p^l$, where $a_2'$ depends on $a_3',\ldots,a_n'$ and $a_1''$ depends on $a_2'',a_3',\ldots,a_n'$ as described above.

Because the union~\eqref{union} is a disjoint union, we now find easily that the measure of $A_{k,l}$ equals $p^{(k-1)(n-2)}p^{k-l}p^{-kn}=p^{-n-k-l+2}.$
\end{proof}

%The above lemma only applies to the case $n\geqslant2$. For completeness, the next lemma treats the case $n=1$.
%
%\begin{lemma}
%Let $f(x),g(x)\in\Zp[x]$ and $k,l\in\Nnul$ with $k\geqslant l$. Let $a\in\Zp$ such that $f(a)\equiv g(a)\equiv0\bmod p$ and both $f'(a)$ and $g'(a)$ are units in \Zp. In this case $\overline{a}=a+p\Zp\in\Fp$ is a common root of $\overline{f}$ and $\overline{g}$ and we know by Hensel's Lemma that $\overline{a}$ is lifted to unique roots $a',a''\in\Zp$ of $f$ and $g$, respectively. If $a'\neq a''$, let $j_a\in\Nnul$ such that $a'\equiv a''\bmod p^j$ and $a'\not\equiv a''\bmod p^{j+1}$, put $j_a=\infty$ otherwise. Then the measure of $A_{k,l}=\{x\in a+p\Zp\mid f(x)\equiv0\bmod p^k\ \textrm{and}\ g(x)\equiv0\bmod p^l\}$ equals $p^{-k}$ if $l\leqslant j_a$ and zero otherwise.
%\end{lemma}
%
%\begin{proof}
%By Corollary~\ref{corol}, we get
%\begin{align*}
%A_{k,l}&=\{x\in a+p\Zp\mid f(x)\equiv0\bmod p^k\}\cap\{x\in a+p\Zp\mid g(x)\equiv0\bmod p^l\}\\
%&=a'+p^k\Zp\cap a''+p^l\Zp\\
%&=
%\begin{cases}
%a'+p^k\Zp&\textrm{if $l\leqslant j_a$,}\\
%\emptyset&\textrm{otherwise,}
%\end{cases}
%\end{align*}
%and so the lemma follows.
%\end{proof}
%
Now we can prove the following proposition, which is the key result for the formula we are looking for.

\begin{proposition}\label{prop}
Let $n\geqslant2$ and $f,g\in\Zp[x_1,\ldots,x_n]$. Let $a\in\Zp^n$ be no solution to any of the following sets of congruence relations:
\begin{equation}\label{stelsels}
\begin{gathered}
\left\{
\begin{aligned}
f(x)&\equiv0\bmod p,\\
\frac{\partial f}{\partial x_i}(x)&\equiv0\bmod p;\quad i=1,\ldots,n;
\end{aligned}
\right.
\ \ \quad
\left\{
\begin{aligned}
g(x)&\equiv0\bmod p,\\
\frac{\partial g}{\partial x_i}(x)&\equiv0\bmod p;\quad i=1,\ldots,n;
\end{aligned}
\right.\\
\text{and}\qquad\left\{
\begin{aligned}
f(x)&\equiv0\bmod p,\\
g(x)&\equiv0\bmod p,\\
\begin{vmatrix}
\frac{\partial f}{\partial x_i}(a)&\frac{\partial f}{\partial x_j}(a)\\
\frac{\partial g}{\partial x_i}(a)&\frac{\partial g}{\partial x_j}(a)
\end{vmatrix}
&\equiv0\bmod p;\quad i,j=1,\ldots,n;\ i<j.
\end{aligned}
\right.
\end{gathered}
\end{equation}
Then for $s\in\C$ with $\Re(s)>0$, we have
\begin{equation*}
\int_{a+(p\Zp)^n}|f(x)|^s|g(x)||dx|=
\begin{cases}
p^{-n},&\textrm{\normalfont if $f(a)\not\equiv0$ and $g(a)\not\equiv0\bmod p$;}\\
p^{-n}\frac{p-1}{p^{s+1}-1},&\textrm{\normalfont if $f(a)\equiv0$ and $g(a)\not\equiv0\bmod p$;}\\
p^{-n}\frac{1}{p+1},&\textrm{\normalfont if $f(a)\not\equiv0$ and $g(a)\equiv0\bmod p$;}\\
p^{-n}\frac{p-1}{(p^{s+1}-1)(p+1)},&\textrm{\normalfont if $f(a)\equiv g(a)\equiv0\bmod p$.}
\end{cases}
\end{equation*}
\end{proposition}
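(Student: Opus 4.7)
The plan is to split into the four cases according to whether $f(a)$ and $g(a)$ reduce to $0$ modulo $p$, noting first that both $|f|$ and $|g|$ are constant on the coset $a+(p\Zp)^n$ whenever the corresponding value at $a$ is a $p$-adic unit. This immediately settles the first case, where $|f|=|g|=1$ throughout and the integral equals the Haar measure $p^{-n}$ of the coset. The two mixed cases reduce to single-polynomial integrals, while the fourth case, with both $f(a)$ and $g(a)$ congruent to $0$, is the genuinely two-dimensional situation that contains the real work.

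For the case $f(a)\equiv 0$, $g(a)\not\equiv 0\bmod p$, the first system in~\eqref{stelsels} rules out $a$ from being a singular point of $\overline f$, so some partial $\partial f/\partial x_i(a)$ is nonzero modulo $p$. Fixing the other $n-1$ coordinates in their respective cosets mod $p$ and applying Corollary~\ref{corol} to the resulting one-variable polynomial in $x_i$ gives
\[
\mu\bigl(\{x\in a+(p\Zp)^n\mid f(x)\equiv 0\bmod p^k\}\bigr)=p^{-n-k+1}
\]
for every $k\geqslant 1$. Subtracting consecutive levels yields $\mu(\{\ord f(x)=k\})=(p-1)p^{-n-k}$, and summing the resulting geometric series gives
\[
\int_{a+(p\Zp)^n}|f(x)|^s|dx|=\sum_{k\geqslant 1}p^{-ks}(p-1)p^{-n-k}=p^{-n}\,\frac{p-1}{p^{s+1}-1}.
\]
The case $f(a)\not\equiv 0$, $g(a)\equiv 0\bmod p$ is handled identically with the roles of $f$ and $g$ swapped and $s$ replaced by $1$, producing the factor $(p-1)/(p^2-1)=1/(p+1)$, which matches the stated answer.

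The main work lies in the fourth case, where the third system in~\eqref{stelsels} guarantees that the Jacobian matrix $J(f,g,a)$ has rank~$2$ modulo $p$. Here I invoke Lemma~\ref{notAkl} to obtain
\[
\mu(A_{k,l})=p^{-n-k-l+2}
\]
for $k\geqslant l\geqslant 1$, and, by the symmetry of the hypotheses under exchange of $f$ and $g$, the same formula when $l>k$. An inclusion--exclusion then yields
\[
\mu\bigl(\{\ord f(x)=k,\,\ord g(x)=l\}\bigr)=\mu(A_{k,l})-\mu(A_{k+1,l})-\mu(A_{k,l+1})+\mu(A_{k+1,l+1})=(p-1)^2p^{-n-k-l}
\]
for all $k,l\geqslant 1$. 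Substituting this into
\[
\int_{a+(p\Zp)^n}|f(x)|^s|g(x)||dx|=\sum_{k,l\geqslant 1}p^{-ks-l}\mu\bigl(\{\ord f=k,\ord g=l\}\bigr)
\]
separates as a product of two geometric series and evaluates to $(p-1)^2p^{-n}\cdot\frac{1}{p^{s+1}-1}\cdot\frac{1}{p^2-1}$, which simplifies to $p^{-n}\frac{p-1}{(p^{s+1}-1)(p+1)}$. The only delicate point is extending Lemma~\ref{notAkl} beyond its stated range $k\geqslant l$, but this follows immediately from applying the lemma to the swapped pair $(g,f)$ with parameters $(l,k)$, since the list of forbidden congruences in~\eqref{stelsels} is symmetric in $f$ and $g$.
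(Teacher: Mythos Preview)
Your proof is correct and follows essentially the same approach as the paper: the first three cases are handled via the one-variable Hensel argument (the paper simply cites \cite[Proposition~3.1]{DH01} for these), and the fourth case proceeds exactly as in the paper, by expanding into the double sum over $(\ord f,\ord g)=(k,l)$, applying Lemma~\ref{notAkl} and inclusion--exclusion to obtain the measure $(p-1)^2p^{-n-k-l}$, and summing the two geometric series. Your explicit remark that Lemma~\ref{notAkl} must be invoked with $f$ and $g$ swapped when $l>k$ is a point the paper leaves implicit.
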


\begin{proof}
The first three cases follow easily from \cite[Proposition~3.1]{DH01}. Let us now assume $f(a)\equiv g(a)\equiv0\bmod p$. For $x\in a+(p\Zp)^n$, we have $f(x)\equiv f(a)\equiv g(x)\equiv g(a)\equiv0\bmod p$. The order of both $f(x)$ and $g(x)$ is thus at least one and we can write
\begin{align*}
&\int_{a+(p\Zp)^n}|f(x)|^s|g(x)||dx|\\&=\sum_{k=1}^{\infty}\sum_{l=1}^{\infty}\int_{\substack{x\in a+(p\Zp)^2\\\ord f(x)=k,\ \ord g(x)=l}}|f(x)|^s|g(x)||dx|\\
&=\sum_{k=1}^{\infty}\sum_{l=1}^{\infty}p^{-ks}p^{-l}\cdot\mu\left(\{x\in a+(p\Zp)^2\mid\ord f(x)=k\ \textrm{and}\ \ord g(x)=l\}\right).
\end{align*}
We will prove below that the measure of
\begin{equation}\label{verzameling}
\{x\in a+(p\Zp)^2\mid\ord f(x)=k\ \textrm{and}\ \ord g(x)=l\}
\end{equation}
equals $p^{-n-k-l}(p-1)^2$. Assuming it now, we can continue:
\begin{align*}
\int_{a+(p\Zp)^n}|f(x)|^s|g(x)||dx|&=\sum_{k=1}^{\infty}\sum_{l=1}^{\infty}p^{-ks-l}p^{-n-k-l}(p-1)^2\\
&=p^{-n}(p-1)^2\sum_{k=1}^{\infty}p^{-(s+1)k}\sum_{l=1}^{\infty}p^{-2l}\\
&=p^{-n}(p-1)^2\frac{p^{-(s+1)}}{1-p^{-(s+1)}}\,\frac{p^{-2}}{1-p^{-2}}\\
&=p^{-n}\frac{p-1}{(p^{s+1}-1)(p+1)},
\end{align*}
giving the result.

It remains to prove that the measure of \eqref{verzameling} is equal to $p^{-n-k-l}(p-1)^2$. With the notation of Lemma~\ref{notAkl}, we have that $A_{k,l}\supset A_{k+1,l}$ and $A_{k,l}\supset A_{k,l+1}$ and that the set $\{x\in a+(p\Zp)^2\mid\ord f(x)=k\ \textrm{and}\ \ord g(x)=l\}$ is the complement of $A_{k+1,l}\cup A_{k,l+1}$ in $A_{k,l}$. Noting further that $A_{k+1,l}\cap A_{k,l+1}=A_{k+1,l+1}$ and making use of Lemma~\ref{notAkl}, we can calculate that
\begin{align*}
&\mu\left(\{x\in a+(p\Zp)^2\mid\ord f(x)=k\ \textrm{and}\ \ord g(x)=l\}\right)\\
&=\mu(A_{k,l})-\mu(A_{k+1,l})-\mu(A_{k,l+1})+\mu(A_{k+1,l+1})\\
&=p^{-n-k-l+2}-2p^{-n-k-l+1}+p^{-n-k-l}\\
&=p^{-n-k-l}(p-1)^2.
\end{align*}
\end{proof}

\begin{corollary}\label{corol3}
Let $n\geqslant2$ and $f,g\in\Zp[x_1,\ldots,x_n]$. Put
\begin{align*}
N&=\#\{a\in\Fpcrossn\mid\overline{f}(a)=0\textrm{ and }\overline{g}(a)\neq0\},\\
P&=\#\{a\in\Fpcrossn\mid\overline{f}(a)\neq0\textrm{ and }\overline{g}(a)=0\},\\
Q&=\#\{a\in\Fpcrossn\mid\overline{f}(a)=\overline{g}(a)=0\}.
\end{align*}
Suppose that none of the sets of congruence relations \eqref{stelsels} from the previous proposition has a solution in \Zpxn. Then for $s\in\C$ with $\Re(s)>0$, we have
\begin{multline*}
\int_{\Zpxn}|f(x)|^s|g(x)||dx|=\\p^{-n}\left((p-1)^n-pN\frac{p^s-1}{p^{s+1}-1}-P\frac{p}{p+1}-pQ\frac{p^s(p+1)-2}{(p^{s+1}-1)(p+1)}\right).
\end{multline*}
\end{corollary}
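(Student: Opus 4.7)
The plan is to reduce the computation to a direct application of Proposition~\ref{prop} on each residue class modulo~$p$. Writing
\begin{equation*}
\Zpxn = \bigsqcup_{\overline{a}\in\Fpcrossn}\bigl(a+(p\Zp)^n\bigr),
\end{equation*}
where, for each $\overline{a}\in\Fpcrossn$, we choose a representative $a\in\Zpxn$, we split
\begin{equation*}
\int_{\Zpxn}|f(x)|^s|g(x)||dx|=\sum_{\overline{a}\in\Fpcrossn}\int_{a+(p\Zp)^n}|f(x)|^s|g(x)||dx|.
\end{equation*}
By the standing hypothesis, none of the three systems in~\eqref{stelsels} admits a solution in $\Zpxn$, so in particular no $a$ appearing above solves any of them. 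Hence Proposition~\ref{prop} applies to each summand, and the value of the integral on $a+(p\Zp)^n$ depends only on whether $\overline{f}(\overline{a})=0$ and/or $\overline{g}(\overline{a})=0$.

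Next I would count the residue classes in each of the four cases, using the definitions of $N$, $P$, $Q$:
\begin{itemize}
\item $\overline{f}(\overline{a})\neq0$ and $\overline{g}(\overline{a})\neq0$: there are $(p-1)^n-N-P-Q$ such classes, each contributing $p^{-n}$;
\item $\overline{f}(\overline{a})=0$ and $\overline{g}(\overline{a})\neq0$: $N$ classes, each contributing $p^{-n}\frac{p-1}{p^{s+1}-1}$;
\item $\overline{f}(\overline{a})\neq0$ and $\overline{g}(\overline{a})=0$: $P$ classes, each contributing $p^{-n}\frac{1}{p+1}$;
\item $\overline{f}(\overline{a})=\overline{g}(\overline{a})=0$: $Q$ classes, each contributing $p^{-n}\frac{p-1}{(p^{s+1}-1)(p+1)}$.
\end{itemize}
Summing yields
\begin{equation*}
p^{-n}\left[(p-1)^n+N\left(\frac{p-1}{p^{s+1}-1}-1\right)+P\left(\frac{1}{p+1}-1\right)+Q\left(\frac{p-1}{(p^{s+1}-1)(p+1)}-1\right)\right].
\end{equation*}

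The remainder is a routine algebraic simplification of each parenthesised factor: one checks that
\begin{equation*}
\frac{p-1}{p^{s+1}-1}-1=-\frac{p(p^s-1)}{p^{s+1}-1},\qquad\frac{1}{p+1}-1=-\frac{p}{p+1},
\end{equation*}
and, after clearing the common denominator $(p^{s+1}-1)(p+1)$,
\begin{equation*}
\frac{p-1}{(p^{s+1}-1)(p+1)}-1=-\frac{p\bigl(p^s(p+1)-2\bigr)}{(p^{s+1}-1)(p+1)}.
\end{equation*}
Substituting these three identities gives precisely the announced formula. The only substantive input is Proposition~\ref{prop}; everything else is bookkeeping, so no step is genuinely hard. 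The mild care needed is verifying that each representative $a$ inherits the non-degeneracy hypothesis, which is immediate because $a\in\Zpxn$ and by assumption no element of $\Zpxn$ solves any of the systems~\eqref{stelsels}.
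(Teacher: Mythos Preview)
Your proof is correct and follows exactly the same approach as the paper: decompose $\Zpxn$ into residue classes modulo $p$, apply Proposition~\ref{prop} to each class, count the classes of each type using $N$, $P$, $Q$, and simplify. Your write-up is in fact slightly more explicit than the paper's, since you spell out the three algebraic identities used in the final simplification.
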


\begin{proof}
Splitting up the integration domain as follows, we get
\begin{multline*}
\int_{\Zpxn}|f(x)|^s|g(x)||dx|=\\
\begin{aligned}
\sum_{\substack{a\in\{1,\ldots,p-1\}^n\\f(a)\not\equiv0\bmod p\\g(a)\not\equiv0\bmod p}}\int_{a+(p\Zp)^n}|f(x)|^s|g(x)||dx|&+\sum_{\substack{a\in\{1,\ldots,p-1\}^n\\f(a)\equiv0\bmod p\\g(a)\not\equiv0\bmod p}}\int_{a+(p\Zp)^n}|f(x)|^s|g(x)||dx|\\
+\sum_{\substack{a\in\{1,\ldots,p-1\}^n\\f(a)\not\equiv0\bmod p\\g(a)\equiv0\bmod p}}\int_{a+(p\Zp)^n}|f(x)|^s|g(x)||dx|&+\sum_{\substack{a\in\{1,\ldots,p-1\}^n\\f(a)\equiv0\bmod p\\g(a)\equiv0\bmod p}}\int_{a+(p\Zp)^n}|f(x)|^s|g(x)||dx|.
\end{aligned}
\end{multline*}
Applying Proposition~\ref{prop}, we find
\begin{multline*}
\int_{\Zpxn}|f(x)|^s|g(x)||dx|\\
\begin{aligned}
&=((p-1)^n-N-P-Q)p^{-n}+Np^{-n}\frac{p-1}{p^{s+1}-1}+Pp^{-n}\frac{1}{p+1}\\
&\quad +Qp^{-n}\frac{p-1}{(p^{s+1}-1)(p+1)}\\
&=p^{-n}\left((p-1)^n-pN\frac{p^s-1}{p^{s+1}-1}-P\frac{p}{p+1}-pQ\frac{p^s(p+1)-2}{(p^{s+1}-1)(p+1)}\right).
\end{aligned}
\end{multline*}
\end{proof}

\section{Igusa's \except{toc}{local }zeta function of a single polynomial and a polynomial measure}\label{formule2}
Let $f,g$ be non-zero polynomials in $\Zp[x_1,\ldots,x_n]$ without a constant term. Suppose $f$ and $g$ are both non-degenerated\footnote{See Definition~\ref{def_non-degenerated}.} over \Fp\ with respect to all the faces of their Newton polyhedron. We will consider the Newton polyhedra \Gf\ and \Gg\ of $f$ and $g$, and their corresponding partitions\footnote{See Definitions~\ref{def_Df} and \ref{def_Dfg}.} \Df, \Dg, and \Dfg\ of \Rplusn.

\begin{notation}
As before, each cone $\delta$ in \Dfg\ can be written in a unique way as the intersection $\delta=\Dft\cap\Dg(\tau')$ of a cone \Dft\ in \Df\ with a cone $\Dg(\tau')$ in \Dg. So to each cone $\delta\in\Dfg$ we can associate a face $\tau$ of \Gf\ and a face $\tau'$ of \Gg. For $\delta\in\Dfg$, denote by \fd\ the polynomial\footnote{See Definition~\ref{def_non-degenerated}.} \ft, with $\tau$ the face of \Gf\ associated to $\delta$. The polynomial \gd\ is defined in the same way. Furthermore, for $\delta\in\Dfg$, we put
\begin{align*}
\Nd&=\#\{a\in\Fpcrossn\mid\fbard(a)=0\textrm{ and }\gbard(a)\neq0\},\\
\Pd&=\#\{a\in\Fpcrossn\mid\fbard(a)\neq0\textrm{ and }\gbard(a)=0\},\\
\Qd&=\#\{a\in\Fpcrossn\mid\fbard(a)=\gbard(a)=0\}.
\end{align*}
\end{notation}

\begin{definition}[Non-degenerated]\label{non-degeneratedII}
Let $n\geqslant2$ and $f,g$ be non-zero polynomials in $\Zp[x_1,$ $\ldots,x_n]$ without a constant term. We say that the pair $(f,g)$ is non-degenerated over \Fp\ with respect to all the cones in the partition \Dfg\ of \Rplusn, associated to $f$ and $g$, if for every cone $\delta\in\Dfg$ and all $a\in\Zpxn$, such that $\fd(a)\equiv\gd(a)\equiv0\bmod p$, the Jacobian matrix $J(\fd,\gd,a)$ has rank $2$ modulo $p$. This amounts to saying that for every cone $\delta\in\Dfg$, the set of congruences
\begin{equation*}
\left\{
\begin{aligned}
\fd(x)&\equiv0\bmod p,\\
\gd(x)&\equiv0\bmod p,\\
\begin{vmatrix}
\frac{\partial \fd}{\partial x_i}(a)&\frac{\partial \fd}{\partial x_j}(a)\\
\frac{\partial \gd}{\partial x_i}(a)&\frac{\partial \gd}{\partial x_j}(a)
\end{vmatrix}
&\equiv0\bmod p;\quad i,j=1,\ldots,n;\ i<j;
\end{aligned}
\right.
\end{equation*}
has no solutions in \Zpxn.
\end{definition}

%\begin{notation}
%For $k=(k_1,\ldots,k_n)\in\Rn$, we denote
%\begin{equation*}
%\sigma(k)=\sum_{i=1}^nk_i.
%\end{equation*}
%\end{notation}
%
\begin{theorem}\label{formule_Zfgs}
Let $p$ be a prime number and $n\geqslant2$. Let $f,g$ be non-zero polynomials in $\Zp[x_1,\ldots,x_n]$ with $f(0)=0$ and $g(0)=0$. Suppose that $f$ and $g$ are both non-degenerated over \Fp\ with respect to all the faces of their Newton polyhedron and that $(f,g)$ is non-degenerated over \Fp\ with respect to all the cones in the partition \Dfg\ of \Rplusn, associated to $f$ and $g$. Then we have:
\begin{equation*}
\Zfgs=\sum_{\delta\in\Dfg}\Ld\Sd,
\end{equation*}
with
\begin{equation*}
\Ld=p^{-n}\left((p-1)^n-p\Nd\frac{p^s-1}{p^{s+1}-1}-\Pd\frac{p}{p+1}-p\Qd\frac{p^s(p+1)-2}{(p^{s+1}-1)(p+1)}\right)
\end{equation*}
and
\begin{equation*}
\Sd=\sum_{k\in\Nn\cap\delta}p^{-m_f(k)s-m_g(k)-\sigma(k)},
\end{equation*}
for every cone $\delta$ in \Dfg.

Just as in Theorem~\ref{theformule1}, the \Sd\ can be calculated by considering a partition of $\delta$ into rational simplicial cones $\delta_i$, $i\in I$, without the introduction of new rays. If for each $i$, we take $\delta_i$ to be strictly positively spanned by the linearly independent, primitive vectors $k_{i,1},\ldots,k_{i,r_i}\in\Nn\setminus\{0\}$, then we have
\begin{equation*}
\Sd=\sum_{i\in I}\frac{\sum_hp^{m_f(h)s+m_g(h)+\sigma(h)}}{\prod_{j=1}^{r_i}(p^{m_f(k_{i,j})s+m_g(k_{i,j})+\sigma(k_{i,j})}-1)},
\end{equation*}
where $h$ runs through the elements of the set
\begin{equation*}
\Zn\cap\left\{\sum\nolimits_{j=1}^{r_i}\lambda_jk_{i,j}\;\middle\vert\;0\leqslant\lambda_j<1\text{ for }j=1,\ldots,r_i\right\}.
\end{equation*}

The real parts of the candidate poles for \Zfg\ are therefore given by the rational numbers $-1$ and
\begin{equation*}
-\frac{m_g(k)+\sigma(k)}{m_f(k)}
\end{equation*}
for $k$ a primitive generator of a ray in \Dfg.
\end{theorem}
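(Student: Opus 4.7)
The plan is to mimic the proof of Theorem~\ref{theformule1}, but to use Corollary~\ref{corol3} in place of the simpler one-polynomial local integral formula. I would begin by observing that the set $\{x \in \Zpn \mid x_i=0 \text{ for some } i\}$ has measure zero, so up to this null set we may partition $\Zpn$ by the vector of orders:
\begin{equation*}
\Zfgs = \sum_{k \in \Nn} \int_{\{x \in \Zpn \mid \ord x = k\}} |f(x)|^s |g(x)| |dx|.
\end{equation*}
Grouping $k$ according to the partition \Dfg\ gives $\Zfgs = \sum_{\delta \in \Dfg} \sum_{k \in \Nn \cap \delta} I_k$ with $I_k$ the above inner integral.

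Next, for each $k \in \Nn \cap \delta$, I would perform the substitution $x = p^k u$, that is $x_i = p^{k_i} u_i$, mapping $\{x : \ord x = k\}$ bijectively onto $\Zpxn$ with $|dx| = p^{-\sigma(k)}|du|$. Writing $\delta = \Dft \cap \Dg(\tau')$ so that $\fd = \ft$ and $\gd = \gt'$, the definitions of $m_f$ and $F_f$ yield the factorizations
\begin{equation*}
f(p^k u) = p^{m_f(k)} \bigl(\fd(u) + p\,\tilde{h}_k(u)\bigr), \qquad g(p^k u) = p^{m_g(k)} \bigl(\gd(u) + p\,\tilde{h}'_k(u)\bigr),
\end{equation*}
for some $\tilde{h}_k, \tilde{h}'_k \in \Zp[x_1,\ldots,x_n]$ (depending on $k$, but only appearing inside $p$~times something). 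Hence
\begin{equation*}
I_k = p^{-m_f(k)s - m_g(k) - \sigma(k)} \int_{\Zpxn} |F_k(u)|^s |G_k(u)| |du|,
\end{equation*}
where $F_k = \fd + p\tilde{h}_k$ and $G_k = \gd + p\tilde{h}'_k$.

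The crucial point is that $\overline{F_k} = \overline{\fd}$, $\overline{G_k} = \overline{\gd}$, and their partial derivatives likewise reduce mod $p$ to those of $\fd, \gd$. Thus the three non-degeneracy hypotheses of Corollary~\ref{corol3} applied to $(F_k,G_k)$ translate exactly into: non-degeneracy of $f$ at the face associated to $\delta$, non-degeneracy of $g$ at the face associated to $\delta$, and the pair non-degeneracy of $(f,g)$ on $\delta$ (Definition~\ref{non-degeneratedII}). All three hold by hypothesis, so Corollary~\ref{corol3} gives
\begin{equation*}
\int_{\Zpxn} |F_k(u)|^s |G_k(u)| |du| = \Ld,
\end{equation*}
with $N, P, Q$ evaluated using $\overline{\fd}, \overline{\gd}$, i.e.\ equal to $\Nd, \Pd, \Qd$, and in particular independent of $k$. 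Factoring out $\Ld$ from the sum over $k \in \Nn \cap \delta$ produces exactly the claimed expression $\Zfgs = \sum_{\delta} \Ld \Sd$.

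Finally, the rational expression for $\Sd$ and the list of candidate poles are obtained by the same standard argument as in Theorem~\ref{theformule1}: triangulate $\delta$ into simplicial subcones $\delta_i$ without new rays, use that $m_f$ and $m_g$ are linear on $\overline{\delta_i}$, and sum geometric series indexed by $\Zn \cap \{\sum \lambda_j k_{i,j} : \lambda_j \geqslant 0\}$ via the usual fundamental-parallelepiped decomposition. The candidate pole $-1$ arises from the denominator $p^{s+1}-1$ in $\Ld$, while the candidate poles $-(m_g(k)+\sigma(k))/m_f(k)$ come from the factors $p^{m_f(k_{i,j})s + m_g(k_{i,j}) + \sigma(k_{i,j})} - 1$ attached to primitive ray generators. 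The main technical obstacle is the bookkeeping in the second step, where one must verify that reducing $F_k$ and $G_k$ modulo $p$ recovers precisely $\fd$ and $\gd$ together with their partial derivatives, so that the three non-degeneracy conditions feed cleanly into Corollary~\ref{corol3}; everything else is a direct transcription of the argument behind Theorem~\ref{theformule1}.
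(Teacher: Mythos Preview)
Your proposal is correct and follows essentially the same route as the paper's own proof: decompose $\Zpn$ according to $\ord x = k$, group the $k$'s by the cones of \Dfg, apply the change of variables $x_j = p^{k_j}u_j$ to reduce to an integral over $\Zpxn$, and invoke Corollary~\ref{corol3} after noting that the non-degeneracy hypotheses on $f$, $g$ and $(f,g)$ transfer to the perturbed polynomials $F_k$, $G_k$ since their reductions mod $p$ coincide with those of $\fd$, $\gd$. The closed formula for $\Sd$ and the enumeration of candidate poles are likewise handled in the paper by reference to \cite[Theorem~4.2]{DH01}, just as you reference Theorem~\ref{theformule1}.
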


\begin{proof}
The proof is again similar to the proof of \cite[Theorem 4.2]{DH01}. Based on the partition \Dfg\ of \Rplusn, we split up the integration domain as follows:
\begin{align*}
\Zfgs&=\int_{\Zpn}|f(x)|^s|g(x)||dx|\\
&=\sum_{\delta\in\Dfg}\sum_{k\in\Nn\cap\delta}\int_{\substack{x\in\Zpn\\\ord x=k}}|f(x)|^s|g(x)||dx|.
\end{align*}

To calculate the integral over $\{x\in\Zpn\mid\ord x=k\}$, we will make a change of variables. For $\delta\in\Dfg$, $k\in\Nn\cap\delta$, and $x\in\Zpn$ with $\ord x=k$, put $x_j=p^{k_j}u_j$ with $u_j\in\Zpx$. Then
\begin{align*}
|dx|&=p^{-\sigma(k)}|du|\qquad\textrm{and}\\
x^{\omega}&=x_1^{\omega_1}\cdots x_n^{\omega_n}=p^{k\cdot\omega}u^{\omega}.
\end{align*}
Note that $f(x)$ is a \Zp-linear combination of the $x^{\omega}$ with $\omega\in\supp(f)$, and thus of the $p^{k\cdot\omega}u^{\omega}$ in the new variables. For $k$ fixed and $\omega\in\supp(f)$, the scalar product $k\cdot\omega$ is minimal with value $m_f(k)$ for $\omega\in F_f(k)=\tau$, where $\tau$ is the face of \Gf\ associated to $\delta$, and not minimal outside $\tau$ (Cfr.\ Definitions~\ref{def_mf}, \ref{def_firstmeetlocus}, and \ref{def_Df}). Separating the maximal power of $p$, we can write
\begin{equation*}
f(x)=p^{m_f(k)}(\ft(u)+p\tilde{f}_{\tau,k}(u))=p^{m_f(k)}(\fd(u)+p\tilde{f}_{\tau,k}(u)),
\end{equation*}
with $\tilde{f}_{\tau,k}(u)$ a polynomial in $\Zp[u_1,\ldots,u_n]$, depending on $f$, $\tau$, and $k$. Analogously, we can write $g(x)$ as
\begin{equation*}
g(x)=p^{m_g(k)}(g_{\tau'}(u)+p\tilde{g}_{\tau',k}(u))=p^{m_g(k)}(\gd(u)+p\tilde{g}_{\tau',k}(u)).
\end{equation*}
This all leads to $\Zfgs=$
\begin{equation*}
\sum_{\delta\in\Dfg}\sum_{k\in\Nn\cap\delta}p^{-m_f(k)s-m_g(k)-\sigma(k)}\int_{\Zpxn}|\fd(u)+p\tilde{f}_{\tau,k}(u)|^s|\gd(u)+p\tilde{g}_{\tau',k}(u)||du|.
\end{equation*}

Let us put
\begin{equation*}
\Ld=\int_{\Zpxn}|\fd(u)+p\tilde{f}_{\tau,k}(u)|^s|\gd(u)+p\tilde{g}_{\tau',k}(u)||du|.
\end{equation*}
Because of the non-degeneracy conditions on $f$ and $g$, for every $\delta\in\Dfg$ the polynomials \fd\ and \gd\ satisfy the condition formulated in the statement of Corollary~\ref{corol3}. It follows immediately that the polynomials $\fd(u)+p\tilde{f}_{\tau,k}(u)$ and $\gd(u)+p\tilde{g}_{\tau',k}(u)$ satisfy the same condition. So by Corollary~\ref{corol3}, we know that
\begin{align*}
\Ld&=\int_{\Zpxn}|\fd(u)+p\tilde{f}_{\tau,k}(u)|^s|\gd(u)+p\tilde{g}_{\tau',k}(u)||du|\\
&=p^{-n}\left((p-1)^n-p\Nd\frac{p^s-1}{p^{s+1}-1}-\Pd\frac{p}{p+1}-p\Qd\frac{p^s(p+1)-2}{(p^{s+1}-1)(p+1)}\right).
\end{align*}
Because this expression for \Ld\ is independent of $k$, we can write
\begin{equation*}
\Zfgs=\sum_{\delta\in\Dfg}\Ld\sum_{k\in\Nn\cap\delta}p^{-m_f(k)s-m_g(k)-\sigma(k)}=\sum_{\delta\in\Dfg}\Ld\Sd,
\end{equation*}
giving the main formula. The \lq closed\rq\ formula for \Sd, based on simplicial subdivision of $\delta$, can be found following the argumentation of \cite[Theorem~4.2]{DH01}.
\end{proof}

\section{Igusa's \except{toc}{local }zeta function of a polynomial mapping and a polynomial measure}\label{formule3}
Let $f_1,\ldots,f_t,g\in\Zp[x_1,\ldots,x_n]$, such that $\ff=(f_1,\ldots,f_t):\Qpn\to\Qp^t$ is a nonconstant polynomial mapping, satisfying $\ff(0)=0$. In this section, we give an explicit formula for Igusa's local zeta function \Zffg, associated to \ff\ and the integration measure $|g(x)||dx|$ on \Zpn, in the same style as the previous ones. This time we adapt a formula for Igusa's local zeta function of a polynomial mapping, given in \cite{VZ08}, which is a generalization of the formulas proven in \cite{DH01} and \cite{HMY07}. In the same way the formula we will state here, generalizes the main results of Sections~\ref{formule1} and \ref{formule2}. Again, we restrict to the case where \ff\ and $g$ satisfy a number of non-degeneracy conditions (see below).

Because the derivation of the formula is completely analogous to what we have done in Sections~\ref{sectHensel} and \ref{formule2} for a single polynomial, in this section, we will only give the results and omit the proofs. We start by stating the analogues of some results from Section~\ref{sectHensel}, to conclude with the formula itself in Theorem~\ref{theo_form3}.

\begin{lemma}\label{notAkl_ff}
Let \ff\ and $g$ be as above and suppose that $n\geqslant t+1$. Let $k,l\in\Nnul$ and $a=(a_1,\ldots,a_n)\in\Zp^n$, such that $\ff(a)\equiv0\bmod p$ and $g(a)\equiv0\bmod p$, and such that the Jacobian matrix $J(\ff,g,a)=J(f_1,\ldots,f_t,g,a)$ has rank $t+1$ modulo $p$. Then the measure of the set
\begin{equation*}
A_{k,l}=\{x=(x_1,\ldots,x_n)\in a+(p\Zp)^n\mid \ff(x)\equiv0\bmod p^k\ \textrm{\normalfont and}\ g(x)\equiv0\bmod p^l\}
\end{equation*}
equals $p^{-n-(k-1)t-l+1}$.
\end{lemma}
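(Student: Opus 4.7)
The proof will closely mirror that of Lemma~\ref{notAkl}, with the two polynomials $f,g$ replaced by $t+1$ polynomials $f_1,\ldots,f_t,g$ and with $t+1$ variables playing the role previously played by $x_1,x_2$.

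First I would normalise the Jacobian assumption. Since $J(\ff,g,a)$ has rank $t+1$ mod $p$, some $(t+1)\times(t+1)$ submatrix is invertible mod $p$; after permuting coordinates I may assume the first $t+1$ columns give such a submatrix. Within this submatrix, the top $t$ rows form a $t\times(t+1)$ block of rank $t$, so I can further reorder the first $t+1$ columns to ensure that, in addition, the $t\times t$ submatrix of $J(\ff,a)$ formed by the first $t$ columns is invertible mod $p$. This preparatory step is the one place where the setup is more delicate than in the $t=1$ case, but it is purely linear algebra.

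Next I would carry out two successive Hensel-type liftings, exactly as in the proof of Lemma~\ref{notAkl}:
\begin{itemize}
\item[(1)] Fix $a_{t+2}',\ldots,a_n'\in\Zp$ with $a_i'\equiv a_i\bmod p$. Apply the multivariate Hensel lemma (i.e.\ the obvious generalisation of Corollary~\ref{corol2} to $t+1$ polynomials in $t+1$ variables) to $\bigl(f_1(x_1,\ldots,x_{t+1},a_{t+2}',\ldots,a_n'),\ldots,f_t(\ldots),g(\ldots)\bigr)$ at $(a_1,\ldots,a_{t+1})$, using the first reduction of the Jacobian. One obtains $(a_1',\ldots,a_{t+1}')\in\Zp^{t+1}$ (depending on $a_{t+2}',\ldots,a_n'$) such that the $l$-approximate zeros of this $(t+1)$-tuple inside $(a_1,\ldots,a_{t+1})+(p\Zp)^{t+1}$ are exactly $(a_1',\ldots,a_{t+1}')+(p^l\Zp)^{t+1}$.
\item[(2)] Fix $a_{t+1}''\in\Zp$ with $a_{t+1}''\equiv a_{t+1}'\bmod p^l$ and apply the multivariate Hensel lemma (generalisation of Corollary~\ref{corol}) to $\ff(x_1,\ldots,x_t,a_{t+1}'',a_{t+2}',\ldots,a_n')$ at the $l$-approximate zero $(a_1',\ldots,a_t')$, using the second reduction of the Jacobian. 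This yields $(a_1'',\ldots,a_t'')\in\Zp^t$ such that the $k$-approximate zeros of $\ff$ inside $(a_1',\ldots,a_t')+(p^l\Zp)^t$ are exactly $(a_1'',\ldots,a_t'')+(p^k\Zp)^t$.
\end{itemize}

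Finally I would assemble the measure. The two steps show that $A_{k,l}$ decomposes as a disjoint union of $(p^k\Zp)^n$-cosets indexed by: the choice of $(a_{t+2}',\ldots,a_n')$ in $a_i+p\Zp$ modulo $p^k$, contributing $(p^{k-1})^{n-t-1}$ choices; and the choice of $a_{t+1}''$ in $a_{t+1}'+p^l\Zp$ modulo $p^k$, contributing $p^{k-l}$ choices. Each coset has measure $p^{-kn}$, giving
\[
\mu(A_{k,l})=p^{(k-1)(n-t-1)}\,p^{k-l}\,p^{-kn}=p^{-n-(k-1)t-l+1},
\]
as claimed. The main obstacle is the initial Jacobian massage; the rest is a direct transcription of the two-polynomial argument, where the only subtle bookkeeping is tracking which variables get fixed at which level ($p$, $p^l$, or $p^k$) in each Hensel step.
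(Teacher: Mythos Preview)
Your proposal is correct and is precisely the approach the paper intends: the paper omits the proof of this lemma entirely, stating only that the arguments of Section~\ref{sectHensel} carry over, and your write-up is exactly the natural transcription of the proof of Lemma~\ref{notAkl} with the pair $(f,g)$ replaced by $(f_1,\ldots,f_t,g)$ and the distinguished variables $x_1,x_2$ replaced by $x_1,\ldots,x_{t+1}$. One small remark: as written, your step~(2) and the count $p^{k-l}$ tacitly use $k\geqslant l$, just as Lemma~\ref{notAkl} does; since Lemma~\ref{notAkl_ff} is stated without that restriction, you may want to note that the case $l>k$ is handled by the symmetric variant (first lift $\ff$ in $x_1,\ldots,x_t$ to level $k$, then lift $g$ to level $l$).
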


\begin{proposition}\label{prop_ff}
Let \ff, $g$, and $n$ be as above. Let $a\in\Zp^n$, such that the following three conditions hold: if $a$ is in the zero locus of $\ff,g,(\ff,g)$, respectively, then the Jacobian matrix of $\ff,g,(\ff,g)$, respectively, and $a$, has maximal rank ($\star$). Then for $s\in\C$ with $\Re(s)>0$, we have
\begin{equation*}
\int_{a+(p\Zp)^n}\|\ff(x)\|^s|g(x)||dx|=
\begin{cases}
p^{-n},&\textrm{\normalfont if $\ff(a)\not\equiv0$ and $g(a)\not\equiv0\bmod p$;}\\
p^{-n}\frac{p^t-1}{p^{s+t}-1},&\textrm{\normalfont if $\ff(a)\equiv0$ and $g(a)\not\equiv0\bmod p$;}\\
p^{-n}\frac{1}{p+1},&\textrm{\normalfont if $\ff(a)\not\equiv0$ and $g(a)\equiv0\bmod p$;}\\
p^{-n}\frac{p^t-1}{(p^{s+t}-1)(p+1)},&\textrm{\normalfont if $\ff(a)\equiv0$ and $g(a)\equiv0\bmod p$.}
\end{cases}
\end{equation*}
\end{proposition}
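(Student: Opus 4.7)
The plan is to mimic the proof of Proposition~\ref{prop} almost verbatim, case-splitting on whether $\ff(a)\equiv 0$ and/or $g(a)\equiv 0\bmod p$, and substituting Lemma~\ref{notAkl_ff} for Lemma~\ref{notAkl}. The first case (both nonzero mod $p$) is immediate: on $a+(p\Zp)^n$ both $\|\ff(x)\|$ and $|g(x)|$ are identically $1$, so the integral equals $\mu(a+(p\Zp)^n)=p^{-n}$. Note that the formula specialises correctly to Proposition~\ref{prop} when $t=1$, which is a useful sanity check throughout.

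For the case $\ff(a)\equiv 0$, $g(a)\not\equiv 0\bmod p$, one still has $|g(x)|=1$ on all of $a+(p\Zp)^n$, so the integral reduces to $\int\|\ff(x)\|^s|dx|$. Writing $B_k=\{x\in a+(p\Zp)^n\mid\ord\ff(x)=k\}$, the integral becomes $\sum_{k\geqslant 1}p^{-ks}\mu(B_k)$. Since $B_k$ is the complement of $\{\ff\equiv 0\bmod p^{k+1}\}$ in $\{\ff\equiv 0\bmod p^k\}$, the single-polynomial-tuple analogue of Lemma~\ref{notAkl_ff} (obtained by dropping $g$ and invoking only the rank-$t$ hypothesis on $J(\ff,a)$ provided by $(\star)$) yields $\mu(\{\ff\equiv 0\bmod p^k\})=p^{-n-(k-1)t}$. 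Hence $\mu(B_k)=p^{-n-kt}(p^t-1)$, and summing the geometric series in $p^{-(s+t)}$ gives $p^{-n}(p^t-1)/(p^{s+t}-1)$. The dual case $\ff(a)\not\equiv 0$, $g(a)\equiv 0$ goes through in the same way with $\mu(\{g\equiv 0\bmod p^l\})=p^{-n-l+1}$ (Hensel applied to the single polynomial $g$ with rank-$1$ Jacobian), producing $p^{-n}/(p+1)$.

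For the main case, $\ff(a)\equiv g(a)\equiv 0\bmod p$, I would split
\begin{equation*}
\int_{a+(p\Zp)^n}\|\ff(x)\|^s|g(x)||dx|=\sum_{k,l\geqslant 1}p^{-ks-l}\,\mu(B_{k,l}),
\end{equation*}
where $B_{k,l}=\{x\in a+(p\Zp)^n\mid\ord\ff(x)=k\text{ and }\ord g(x)=l\}$. Observing that $B_{k,l}=A_{k,l}\setminus(A_{k+1,l}\cup A_{k,l+1})$ with $A_{k+1,l}\cap A_{k,l+1}=A_{k+1,l+1}$, inclusion–exclusion gives $\mu(B_{k,l})=\mu(A_{k,l})-\mu(A_{k+1,l})-\mu(A_{k,l+1})+\mu(A_{k+1,l+1})$. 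Plugging in $\mu(A_{k,l})=p^{-n-(k-1)t-l+1}$ from Lemma~\ref{notAkl_ff} (whose rank-$(t+1)$ hypothesis is supplied by $(\star)$) and simplifying gives the clean expression $\mu(B_{k,l})=p^{-n-kt-l}(p^t-1)(p-1)$. Substituting back and separating the sums yields
\begin{equation*}
p^{-n}(p^t-1)(p-1)\Bigl(\sum_{k\geqslant 1}p^{-k(s+t)}\Bigr)\Bigl(\sum_{l\geqslant 1}p^{-2l}\Bigr)=\frac{p^{-n}(p^t-1)}{(p^{s+t}-1)(p+1)},
\end{equation*}
matching the claimed value.

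The main obstacle is purely bookkeeping: one must be careful to invoke the correct Hensel-type measure formula in each case (the rank-$t$ version for $\ff$-only balls, the rank-$1$ version for $g$-only balls, and the full Lemma~\ref{notAkl_ff} only in the mixed case), and to track exponents cleanly through the inclusion–exclusion step. Since every nontrivial input — the Hensel-type measure formulas and the locally constant structure of $\|\ff\|$ and $|g|$ on a small enough ball — is already in hand, no genuinely new idea is required beyond what underlay Proposition~\ref{prop}.
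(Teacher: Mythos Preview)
Your proposal is correct and follows exactly the approach the paper intends: the paper omits the proof of this proposition entirely, stating at the start of Section~\ref{formule3} that the derivation is ``completely analogous'' to Sections~\ref{sectHensel} and~\ref{formule2}, i.e.\ to the proof of Proposition~\ref{prop} with Lemma~\ref{notAkl_ff} in place of Lemma~\ref{notAkl}. Your case split, inclusion--exclusion computation of $\mu(B_{k,l})=p^{-n-kt-l}(p^t-1)(p-1)$, and the resulting geometric-series evaluations all check out and reproduce the stated formula.
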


\begin{corollary}\label{corol3_ff}
Let \ff, $g$, and $n$ be as above. Put
\begin{align*}
N&=\#\{a\in\Fpcrossn\mid\overline{\ff}(a)=0\textrm{ and }\overline{g}(a)\neq0\},\\
P&=\#\{a\in\Fpcrossn\mid\overline{\ff}(a)\neq0\textrm{ and }\overline{g}(a)=0\},\\
Q&=\#\{a\in\Fpcrossn\mid\overline{\ff}(a)=0\textrm{ and }\overline{g}(a)=0\}.
\end{align*}
Suppose that the three conditions ($\star$) from the previous proposition hold for all $a\in\Zpxn$. Then for $s\in\C$ with $\Re(s)>0$, we have
\begin{multline*}
\int_{\Zpxn}\|\ff(x)\|^s|g(x)||dx|=\\p^{-n}\left((p-1)^n-p^tN\frac{p^s-1}{p^{s+t}-1}-P\frac{p}{p+1}-pQ\frac{p^{t-1}(p^s(p+1)-1)-1}{(p^{s+t}-1)(p+1)}\right).
\end{multline*}
\end{corollary}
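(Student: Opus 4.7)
The approach will parallel exactly the proof of Corollary~\ref{corol3}: we partition the integration domain \Zpxn\ into residue classes modulo $p$ and apply Proposition~\ref{prop_ff} on each class. Writing
\[
\Zpxn=\bigsqcup_{a\in\{1,\ldots,p-1\}^n}\bigl(a+(p\Zp)^n\bigr),
\]
we split the integral accordingly into four subsums indexed by which of the conditions $\overline{\ff}(a)=0$ and $\overline{g}(a)=0$ hold. Since the hypothesis ($\star$) of Proposition~\ref{prop_ff} is assumed to hold for every $a\in\Zpxn$, that proposition applies verbatim to each class and supplies the value of the integral.

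Next I would collect the four contributions using the counting convention: the number of classes with $\overline{\ff}(a)\ne 0$ and $\overline{g}(a)\ne 0$ is $(p-1)^n-N-P-Q$, while the remaining three cases contribute $N$, $P$ and $Q$ classes respectively. This yields
\[
\int_{\Zpxn}\|\ff(x)\|^s|g(x)||dx|
=p^{-n}\Bigl[(p-1)^n-N-P-Q
+N\tfrac{p^t-1}{p^{s+t}-1}
+P\tfrac{1}{p+1}
+Q\tfrac{p^t-1}{(p^{s+t}-1)(p+1)}\Bigr].
\]

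The final step is a purely algebraic simplification that matches the claimed closed form. I would verify it coefficient-by-coefficient. For the $N$-term,
\[
-1+\tfrac{p^t-1}{p^{s+t}-1}=\tfrac{-p^t(p^s-1)}{p^{s+t}-1},
\]
giving the factor $-p^t\tfrac{p^s-1}{p^{s+t}-1}$. For the $P$-term, $-1+\tfrac{1}{p+1}=-\tfrac{p}{p+1}$. For the $Q$-term, expanding $(p^{s+t}-1)(p+1)=p^{s+t+1}+p^{s+t}-p-1$ in the numerator of $-1+\tfrac{p^t-1}{(p^{s+t}-1)(p+1)}$ yields
\[
\tfrac{-p^{s+t+1}-p^{s+t}+p+p^t}{(p^{s+t}-1)(p+1)}
=-p\cdot\tfrac{p^{t-1}(p^s(p+1)-1)-1}{(p^{s+t}-1)(p+1)},
\]
as desired.

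There is no real obstacle here: the result is an immediate corollary of Proposition~\ref{prop_ff} together with a routine but delicate algebraic rearrangement. The only point requiring a small amount of care is the bookkeeping in the last step, where one must track the factor $p^t$ that appears from combining $p^{s+t}-1$ with $p^t-1$; everything else is formal.
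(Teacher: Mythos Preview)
Your proof is correct and follows precisely the approach the paper intends: the paper explicitly omits the proof of this corollary, stating that the derivation is ``completely analogous'' to that of Corollary~\ref{corol3}, which is exactly the argument you give---partition \Zpxn\ into residue classes, apply Proposition~\ref{prop_ff} on each, and simplify. Your algebraic verification of the $Q$-term is also accurate.
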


\begin{notation}
Again each cone $\delta$ in \Dffg\ can be written in a unique way as the intersection $\delta=\Dfft\cap\Dg(\tau')$ of a cone \Dfft\ in \Dff\ with a cone $\Dg(\tau')$ in \Dg. So to each cone $\delta\in\Dffg$ we can associate a face $\tau$ of \Gff\ and a face $\tau'$ of \Gg. For $\delta\in\Dffg$, denote by \ffd\ the polynomial \fft, with $\tau$ the face of \Gff\ associated to $\delta$, and analogously for \gd. Furthermore, for $\delta\in\Dffg$, we put
\begin{align*}
\Nd&=\#\{a\in\Fpcrossn\mid\ffbard(a)=0\textrm{ and }\gbard(a)\neq0\},\\
\Pd&=\#\{a\in\Fpcrossn\mid\ffbard(a)\neq0\textrm{ and }\gbard(a)=0\},\\
\Qd&=\#\{a\in\Fpcrossn\mid\ffbard(a)=0\textrm{ and }\gbard(a)=0\}.
\end{align*}
\end{notation}

We need the following notion of non-degeneracy.

\begin{definition}[strongly non-degenerated]\label{non-degeneratedIII}
Let \ff, $g$, and $n$ be as above. We say that the pair $(\ff,g)$ is strongly non-degenerated over \Fp\ with respect to (all the cones of) the partition \Dffg\ of \Rplusn, associated to \ff\ and $g$, if for every cone $\delta\in\Dffg$ and all $\overline{a}\in\Fpcrossn$, such that $\overline{f_{1,\delta}}(\overline{a})=\cdots=\overline{f_{t,\delta}}(\overline{a})=\gbard(\overline{a})=0$, the Jacobian matrix
\begin{equation*}
J(\overline{\ffd},\gbard,\overline{a})=
\begin{pmatrix}
(\partial \overline{f_{1,\delta}}/\partial x_1)(\overline{a})&\cdots&(\partial \overline{f_{1,\delta}}/\partial x_n)(\overline{a})\\
\vdots&\ddots&\vdots\\
(\partial \overline{f_{t,\delta}}/\partial x_1)(\overline{a})&\cdots&(\partial \overline{f_{t,\delta}}/\partial x_n)(\overline{a})\\[+1ex]
(\partial \gbard/\partial x_1)(\overline{a})&\cdots&(\partial \gbard/\partial x_n)(\overline{a})
\end{pmatrix}
\end{equation*}
has maximal rank ($=t+1$).
\end{definition}

\begin{theorem}\label{theo_form3}
Let $p$ be a prime number. Let $\ff=(f_1,\ldots,f_t):\Qpn\to\Qp^t$ be a nonconstant polynomial mapping, with $f_1,\ldots,f_t\in\Zp[x_1,\ldots,x_n]$, satisfying $f_i(0)=0$ for all $i$, and suppose that $n\geqslant t+1$. Let $g$ be a nonconstant polynomial in $\Zp[x_1,\ldots,x_n]$ with $g(0)=0$, and suppose that $g$ is non-degenerated over \Fp\ with respect to its Newton polyhedron. Suppose also that \ff\ and $(\ff,g)$ are strongly non-degenerated over \Fp\ with respect to the Newton polyhedron \Gff\ and the partition \Dffg\ of \Rplusn, respectively. Then we have:
\begin{equation*}
\Zffgs=\sum_{\delta\in\Dffg}\Ld\Sd,
\end{equation*}
with
\begin{equation*}
\Ld=p^{-n}\left((p-1)^n-p^t\Nd\frac{p^s-1}{p^{s+t}-1}-\Pd\frac{p}{p+1}-p\Qd\frac{p^{t-1}(p^s(p+1)-1)-1}{(p^{s+t}-1)(p+1)}\right)
\end{equation*}
and
\begin{equation*}
\Sd=\sum_{k\in\Nn\cap\delta}p^{-m_{\ff}(k)s-m_g(k)-\sigma(k)},
\end{equation*}
for every cone $\delta$ in \Dffg.

Again\footnote{Cfr.\ Theorems~\ref{theformule1} and \ref{formule_Zfgs}.} there exists a closed, rational expression for the \Sd, showing that the real parts of the candidate poles for \Zffg\ are given by the rational numbers $-1$ and
\begin{equation*}
-\frac{m_g(k)+\sigma(k)}{m_{\ff}(k)}
\end{equation*}
for $k$ a primitive generator of a ray in \Dffg.
\end{theorem}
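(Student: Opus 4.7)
The plan is to mimic the proof of Theorem~\ref{formule_Zfgs} almost verbatim, with Corollary~\ref{corol3_ff} taking the role of Corollary~\ref{corol3}. First I would partition the integration domain according to $\Dffg$ and the fibres of $\ord\colon\Zpn\to\Nn$, writing
\begin{equation*}
\Zffgs=\sum_{\delta\in\Dffg}\sum_{k\in\Nn\cap\delta}\int_{\substack{x\in\Zpn\\\ord x=k}}\|\ff(x)\|^s|g(x)||dx|.
\end{equation*}
On each stratum $\{\ord x=k\}$ I perform the substitution $x_j=p^{k_j}u_j$, $u_j\in\Zpx$, whose Jacobian contributes $|dx|=p^{-\sigma(k)}|du|$. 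For $\omega\in\supp(\ff)$, the monomial transforms as $x^{\omega}=p^{k\cdot\omega}u^{\omega}$, and since the minimum of $k\cdot\omega$ on $\supp(\ff)$ is achieved exactly on the face $\tau=F_{\ff}(k)$ associated to $\delta$, one may factor out $p^{m_{\ff}(k)}$ from each $f_i$ to write $f_i(x)=p^{m_{\ff}(k)}\bigl(f_{i,\delta}(u)+p\tilde f_{i,\tau,k}(u)\bigr)$ with $\tilde f_{i,\tau,k}\in\Zp[u]$; hence $\|\ff(x)\|=p^{-m_{\ff}(k)}\|\ffd(u)+p\tilde\ff_{\tau,k}(u)\|$. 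The same factorisation for $g$ gives $|g(x)|=p^{-m_g(k)}|\gd(u)+p\tilde g_{\tau',k}(u)|$.

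Collecting the powers of $p$, these steps transform the inner integral into
\begin{equation*}
p^{-m_{\ff}(k)s-m_g(k)-\sigma(k)}\int_{\Zpxn}\|\ffd(u)+p\tilde\ff_{\tau,k}(u)\|^s|\gd(u)+p\tilde g_{\tau',k}(u)||du|.
\end{equation*}
The next step, which is where the non-degeneracy hypotheses are used, is to verify that for every $\delta\in\Dffg$ the pair $(\ffd+p\tilde\ff_{\tau,k},\,\gd+p\tilde g_{\tau',k})$ satisfies the three conditions $(\star)$ required by Corollary~\ref{corol3_ff} at every $a\in\Zpxn$. Since those conditions are statements about the reductions modulo $p$, the $p\tilde{(\cdot)}$ tails vanish, so the conditions become statements about $\overline{\ffd}$ and $\overline{\gd}$ alone: strong non-degeneracy of \ff\ with respect to \Gff\ gives the first condition; non-degeneracy of $g$ with respect to \Gg, via $\gd=g_{\tau'}$ for the face $\tau'$ of \Gg\ associated to $\delta$, gives the second; and strong non-degeneracy of $(\ff,g)$ with respect to \Dffg\ (Definition~\ref{non-degeneratedIII}) gives the third.

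Applying Corollary~\ref{corol3_ff} then evaluates the integral to the announced \Ld, which crucially is independent of the particular lattice point $k\in\Nn\cap\delta$ (it depends only on $\delta$ through $\Nd$, $\Pd$, $\Qd$). Factoring out \Ld\ from the sum over $k\in\Nn\cap\delta$ leaves
\begin{equation*}
\Zffgs=\sum_{\delta\in\Dffg}\Ld\sum_{k\in\Nn\cap\delta}p^{-m_{\ff}(k)s-m_g(k)-\sigma(k)}=\sum_{\delta\in\Dffg}\Ld\Sd,
\end{equation*}
which is the main formula. The closed rational expression for \Sd\ is obtained by a simplicial subdivision of $\delta$ without introducing new rays and summing geometric series; this is identical to the argument in \cite[Theorem~4.2]{DH01} and uses only that $m_{\ff}$ and $m_g$ are linear on each cone of $\Dffg$. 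The list of candidate-poles is then read off from the denominators: the factors $p^{s+t}-1$ and $p+1$ in \Ld\ contribute the real part $-1$, while the factors $p^{m_{\ff}(k_{i,j})s+m_g(k_{i,j})+\sigma(k_{i,j})}-1$ in \Sd\ contribute the rational numbers $-(m_g(k)+\sigma(k))/m_{\ff}(k)$ for $k$ a primitive generator of a ray in \Dffg.

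The main obstacle is the verification that the non-degeneracy hypotheses on $\ff$, $g$ and $(\ff,g)$ suffice to place the perturbed polynomials inside the hypothesis of Corollary~\ref{corol3_ff}; once that bookkeeping is done the rest is a mechanical transcription of the single-polynomial proof. A minor technical point is the assumption $n\geqslant t+1$, which is inherited from Lemma~\ref{notAkl_ff} and ensures that the Jacobian of $(\ff,g)$ can have rank $t+1$.
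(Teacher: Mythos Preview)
Your proposal is correct and follows exactly the approach the paper indicates: the paper's own proof consists of the single sentence ``The proof of the theorem is completely similar to the proof of Theorem~\ref{formule_Zfgs},'' and what you have written is precisely that proof transcribed to the polynomial-mapping setting, with Corollary~\ref{corol3_ff} replacing Corollary~\ref{corol3}. One small slip in your final paragraph: the factor $p^{s+t}-1$ in \Ld\ contributes candidate-poles with real part $-t$, not $-1$ (and $p+1$ contributes none, being independent of $s$); this is arguably a typo in the theorem statement itself rather than a defect in your argument.
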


\begin{proof}
The proof of the theorem is completely similar to the proof of Theorem~\ref{formule_Zfgs}.
\end{proof}

\bibliography{bartbib}
\bibliographystyle{amsplain}
\end{document}